\numberwithin{equation}{section}
\title{The quantum Cartan algebra associated to a bicovariant differential calculus}
\author{Lucio S. Cirio, Chiara Pagani, Alessandro Zampini}
\address[]{\textit{Lucio Simone Cirio} \newline \indent Max Planck Institut f\"ur Mathematik, Vivatsgasse 7, 53111 Bonn, Germany \newline \indent \textit{Present address:} Grupo de Física Matemática
Complexo Interdisciplinar da Universidade de Lisboa, \newline \indent
Av. Prof. Gama Pinto, 2
1649-003 Lisboa,
Portugal}
\email{cirio@cii.fc.ul.pt }
\address[]{\textit{Chiara Pagani} \newline \indent  Institut f\"ur Analysis, Leibniz Universit\"at Hannover, Welfengarten 1, 30167 Hannover, Germany}
\email{pagani@math.uni-hannover.de}
\address[]{\textit{Alessandro Zampini} \newline \indent  Max Planck Institut f\"ur Mathematik, Vivatsgasse 7, 53111 Bonn, Germany}
\email{zampini@mpim-bonn.mpg.de}
\theoremstyle{plain}
\newtheorem{thm}{Theorem}[section]
\newtheorem{lem}[thm]{Lemma}
\newtheorem{prop}[thm]{Proposition}
\newtheorem{defi}[thm]{Definition}
\theoremstyle{remark}
\newtheorem{rem}[thm]{Remark}
\newcommand{\R}{{\mathbb{R}}}
\newcommand{\Z}{{\mathbb{Z}}}
\newcommand{\C}{\mathbb{C}}
\newcommand{\T}{\mathcal{T}}
\let\o=\omega
\let\a=\alpha
\let\b=\beta
\let\e=\eta
\newcommand{\la}{\triangleright}
\newcommand{\ra}{\triangleleft}
\newcommand{\co}{\triangle}
\newcommand{\sff}[2]{S(f_{#1#2})}
\newcommand{\sig}[4]{\sigma_{#1#2}^{#3#4}}
\newcommand{\dd}{{\rm d}}
\newcommand{\beq}{\begin{equation}}
\newcommand{\eeq}{\end{equation}}
\newcommand{\nn}{\nonumber}
\newcommand{\g}{\mathfrak{g}}
\newcommand{\gt}{\tilde{\g}}
\newcommand{\cart}{\mathcal{C}}
\newcommand{\even}{\mathcal{E}_{\Gamma}}
\newcommand{\h}{{H}}
\newcommand{\ext}{\Gamma^{\wedge}}
\newcommand{\lie}[1]{{L}_{#1}}
\newcommand{\liel}[1]{{L}_{#1}}
\newcommand{\lier}[1]{{L}^{\scriptscriptstyle{R}}_{#1}}
\newcommand{\intl}[1]{i_{#1}}
\newcommand{\intr}[1]{i^{\scriptscriptstyle{R}}_{#1}}
\newcommand{\lcartop}{(\liel{a}, \liel{jk}, \intl{a}, \dd)}
\newcommand{\de}{\mathfrak{\delta}}
\def\SUq{\mathrm{SU}_q(2)}
\def\usu{\mathfrak{U}_q(\mathfrak{su}(2))}
\newcommand{\ot}{\otimes}
\def\pa#1#2{\langle #1 , #2 \rangle}
\newcommand\op{\omega_+}
\newcommand\om{\omega_-}
\newcommand\oo{\omega_0}
\newcommand\oz{\omega_z}
\newcommand\x[1]{X_{#1}}
\newcommand\xp{X_+}
\newcommand\xm{X_-}
\newcommand\xo{X_0}
\newcommand\xz{X_z}
\newcommand\ii{\{-, +, z, 0 \}}
\def\sq{\mathrm{S}_q}
\newcommand{\hs}[2]{\left\langle #1,#2\right\rangle}  
\newcommand{\hp}[2]{\{ #1,#2\}}
\newcommand{\rinv}{\Gamma_{inv}}
\newcommand{\linv}{_{inv}\Gamma}
\newcommand{\dg}{\Delta_\Gamma}
\newcommand{\gd}{\;_\Gamma \Delta}
\newcommand{\rw}{\rightarrow}
\newcommand{\A}{A}
\newcommand{\M}{\mathcal{M}^\A_{bicov}}
\newcommand{\cartsu}{\cart_{\scriptscriptstyle{4D_+}}}
\newcommand{\gn}{\mathfrak{g}_0}
\newcommand{\gp}{\mathfrak{g}_1}
\newcommand{\gm}{\mathfrak{g}_{-1}}
\newcommand{\IZ}{\ensuremath{\mathbb{Z}}}
\begin{document}

\begin{abstract}
We associate to any (suitable) bicovariant differential calculus on a quantum group a Cartan Hopf algebra which has a left, respectively right, representation in terms of left,  respectively right, Cartan calculus operators. The example of the Hopf algebra associated to the
$4D_+$ differential calculus on $SU_q(2)$ is described.
\end{abstract}

\maketitle
\noindent
\textit{Mathematics Subject Classifications (2010)}. 16T05, 58B32, 81R50.\\
\textit{Keywords.} Differential calculi on quantum groups, Cartan calculus, Hopf algebras. 
\tableofcontents

\section{Introduction}
\label{s:cla}

This paper deals with the notion of Cartan calculus for quantum groups.
If a Lie group $G$ acts on a differentiable manifold $M$ there is an induced action by pullback of $G$ on the exterior algebra $\Omega(M)$ and in particular there is an induced action by Lie and inner derivations of the Lie algebra $\g$ of $G$ on $\Omega(M)$, giving to
$\Omega(M)$ the structure of
a \textit{$\gt$-module} \eqref{gt}. This notion, introduced by Cartan in \cite{cartan1} but appeared in the literature under various names, is at the base of the formulation of the
so-called Weil and Cartan models of equivariant cohomology (see e.g. \cite{gs}).
Our aim is to generalize to a noncommutative setting this algebraic approach to the classical Cartan calculus
(for the action of a Lie group G on itself) as a first step towards the construction of models of equivariant cohomology for quantum groups \cite{cpz}.

\bigskip
\noindent
We recall few facts about the classical theory before to move to the noncommutative setting.
Let $G$ be a  finite dimensional Lie group and $e_i$, $i=1, \dots , N=dim(G)$ be a basis for the Lie algebra $\g$ of $G$.
There is a $\IZ$-graded Lie algebra $\gt$ over $\C$ naturally associated with $\g$:
\begin{equation} \label{gt}
\gt= \bigoplus_{i \in \IZ} \g_i := \gm \oplus \gn \oplus \gp \; ,
\end{equation}
where $\gn$ and $\gm$ are $N$-dimensional vector spaces and $\gp$ is a $1$-dim vector space. Fixed  $e_i$,
$\xi_i$ and $\delta$ be basis for $\gn,\gm$ and $\gp$ respectively, the  $\IZ$-graded Lie algebra structure for $\gt$ is given by the bracket $[\, , \,]: \g_i  \times \g_j \rightarrow \g_{i+j}$ defined on the basis  elements as
\begin{equation} \label{laca}
\begin{array}{lllll}
[e_i, e_j]= c_{ij}^k e_k  ~  ,& \quad & [e_j, \xi_k]= c_{jk}^l \xi_l ~ ,& \quad& [\xi_k, \xi_l] = 0 ~ , \\
{[} \delta , \xi_k ] =e_k ~  , & &
[\delta, e_i] =0 ~  , & & [\delta,\delta]=0 ~ ,
\end{array}
\end{equation}
where $c_{ij}^k$ are the structure constants of $\g$ relative to the chosen basis.
We refer to $\gt$ as the \emph{`classical Cartan algebra'}.

As mentioned, if $G$ acts differentiably on a manifold $M$, then Lie and inner derivatives  act on $\Omega(M)$ giving a representation
of $\gt$ by graded derivations, see \cite{gs}.  This happens in particular for $G$ acting on itself via left and right multiplication $\mathrm{l}, \mathrm{r}:G\times G\to G$. For any $T\in\mathfrak{g}$ it is possible to introduce a vector field $R_{T}\in\mathfrak{X}(G)$ as the derivation on the algebra of functions $\mathcal{F}(G)$ on $G$, defined in terms of  the pull-back mapping $\mathrm{l}_{g}^{*}:\mathcal{F}(G)\to \mathcal{F}(G)$ induced by $\mathrm{l}_{g}$, the left multiplication by $g \in G$. On $f\in \mathcal{F}(G)$:
\beq
R_{T}(f)=\frac{\dd}{\dd s}\left. (\mathrm{l}_{\exp sT}^{*}(f))\right|_{s=0}.
\label{Rv}
\eeq
It is called the right invariant vector field associated to $T\in\mathfrak{g}$ since it is invariant under the push-forward mapping induced by the right multiplication.

Similarly for any $T\in\mathfrak{g}$ the vector field $L_{T}\in\mathfrak{X}(G)$ is defined as the infinitesimal generator of the  pull-back  mapping $\mathrm{r}^{*}_{g}$ induced by the right action $\mathrm{r}_{g}$:
\beq
L_{T}(f)=\frac{\dd}{\dd s}\left.(\mathrm{r}^{*}_{\exp sT}(f))\right|_{s=0}
\label{Lv}
\eeq
on any $f\in\,\mathcal{F}(G)$, and satisfies a property of left invariance under the push-forward of the left multiplication.

The left and right invariant vector fields $\{L_{i}\}, ~\{R_{i}\}$ associated to the basis elements $e_{i}\in\,\mathfrak{g}$,  form two $\mathcal{F}(G)$-bimodule basis of $\mathfrak{X}(G)$, and provide dual $\mathcal{F}(G)$-bimodule  basis of $\Omega^{1}(G)$ in terms of left and right invariant 1-forms $\{\omega_{i}\}, ~\{\eta_{i}\}$.
These are implicitly defined via the contraction operators as
$i_{L_{j}}\omega_{k}=\delta_{jk},\;
i_{R_{j}}\eta_{k}=\delta_{jk}$,
and satisfy a property of left ($l^{*}_{g}(\omega_{i})=\omega_{i}$) or of right ($r^{*}_{g}(\eta_{i})=\eta_{i}$) invariance. The action of the differential on 0-forms can be written as
$\dd f=\sum_i L_{i}(f)\,\omega_{i}= \sum_i R_{i}(f)\,\eta_{i}.$
From the $\mathcal{F}(G)$-bimodule structures of both $\mathfrak{X}(G)$ and $\Omega^{1}(G)$, it is immediate to compute $
R_{i}=J_{ik}L_{k}$, equivalently $\eta_{i}J_{ik}=\omega_{k}$,
with $J_{ik}\in\,\mathcal{F}(G)$ related to the adjoint representation of $G$ on $\mathfrak{g}$.

\noindent
The map $\lambda:\gt\to\mathrm{End}(\Omega(G))$ defined on generators by
\beq
e_{j}\mapsto\mathcal{L}_{L_j}, \qquad \xi_{j}\mapsto i_{L_j},\qquad \delta\mapsto\dd
\label{lacla}
\eeq
and extended as a $\Z_{2}$-graded Lie algebra homomorphism provides $\Omega(G)$ a left $\gt$-module structure by \emph{left} Cartan calculus derivations, as one can check by comparing relations \eqref{laca} with the Weil equations:
\begin{align}
[\mathcal{L}_{L_j},\mathcal{L}_{L_k}] & =c_{jk}^{h}\,\mathcal{L}_{L_h} \; , & [i_{L_j},i_{L_k}] & =0 \; , & [\mathcal{L}_{L_j},i_{L_k}] & =c_{jk}^{h}\,i_{L_h} \; , \nn \\
[\mathcal{L}_{L_j},\dd] & =0 \; , & [i_{L_j},\dd] & = \mathcal{L}_{L_j} \; , & [\dd,\dd] & 	=0 \; ,
\label{ccrleft}
\end{align}
for $j,k,h \in \{1, \dots , N \}$.
Analogously the map $\rho:\gt\to\mathrm{End}(\Omega(G))$ given on generators as
\beq
e_{j}\mapsto \mathcal{L}_{R_j}, \qquad \xi_{j}\mapsto i_{R_j},\qquad\de\mapsto - \dd
\label{racla}
\eeq
and extended as a $\Z_{2}$-graded Lie algebra anti-homomorphism provides $\Omega(G)$ with a right $\gt$-module structure in terms of the
\emph{right} Cartan calculus operators $\{\mathcal{L}_{R_j}, i_{R_j}, \dd\}$ satisfying
\begin{align}
[\mathcal{L}_{R_j},\mathcal{L}_{R_k}] & =-c_{jk}^{h}\,\mathcal{L}_{R_h} \, , & [i_{R_j},i_{R_k}] & =0 \, , & [\mathcal{L}_{R_j},i_{R_k}] & =-c_{jk}^{h}\,i_{R_h} \, , \nn \\
[\mathcal{L}_{R_j},\dd] & =0~, & [i_{R_j},\dd] & =\mathcal{L}_{R_j}~, & [\dd,\dd] & =0 \,.
\label{ccrright}
\end{align}
\bigskip

We hence move ahead to noncommutative geometry.
In \cite{wor,wor2} Woronowicz  developed a theory of bicovariant differential calculus on  quantum groups, introducing the notions of bicovariant bimodules, exterior algebra  and quantum tangent space $\T_\Gamma$ associated to a first order differential calculus $(\Gamma, \dd)$.
The problem of defining differential operators on quantum groups has been widely studied since then. Quantum Lie and inner derivatives appeared first in
\cite{ac} and \cite{swz1}, with the formulation of a differential geometry on  $GL_{q}(N), SL_{q}(N)$.
A general algebra of differential operators on a quasi-triangular Hopf algebra ${U}$ was given in \cite{sch4} in terms of a cross-product algebra ${U}\rtimes {H}$, with ${H}$ dually paired to ${U}$, thus generalising the classical semidirect product of two algebras using  the $R$-matrix structure of ${U}$. With ${A}$ and ${H}$ dually paired Hopf algebras, an algebra of graded derivative Cartan operators, whose action is given via the natural left action ${H}\la {A}$ coming from the pairing, has been studied in \cite{sch1, sch2, sch3, schef}. Lie and inner derivatives $\mathcal{L}_{h}, i_{h}$  giving a Cartan identity are at first introduced as acting on   the universal differential envelope $(\Omega({A}), \delta)$ -- that is on the universal calculus -- for any $h\in\, {H}$ so that    elements of ${H}$ are recovered as left-invariant vectors, and then to elements in ${A}\rtimes {H}$, which are intended as vector fields. The analysis is  extended further,  in order to consider graded derivatives acting on
 the exterior algebra $(\Omega( {A})_{\mathcal{T}_q}, \dd)$ coming from a calculus characterised by a suitable quantum tangent space $\mathcal{T}_{q}\subset\ker\,\varepsilon_{ {H}}$.  With $\chi\in\,\mathcal{T}_{q}$ one has degree 0 and (-1) operators $\mathcal{L}_{\chi}, i_{\chi}$ for left invariant vectors and for general vector fields, i.e. elements in $ {A}\rtimes \mathcal{T}_{q}$. This algebra of operators has a Hopf algebra structure. \\

 In this paper we first introduce  an analogue of the universal enveloping algebra of the classical Cartan algebra $\gt$, namely a quantum Cartan algebra $\cart_{\Gamma}$ associated to the differential calculus on a quantum group. 
Since our analysis is intended as a first step towards defining Weil and Cartan models for equivariant cohomology on quantum groups and their quantum homogeneous spaces,  we provide $\cart_{\Gamma}$  with both a  left and a  right representation in terms of graded differential operators  (Lie  and inner derivatives) ``dual" to the set of left (resp. right) invariant exterior forms in $\Gamma$, so to obtain a consistent left (right) action of $\cart_{\Gamma}$ on the exterior algebras defined on left (right) quantum homogeneous spaces.  
Different types of noncommutative algebras can be considered.
The case of algebras in which the deformation is generated by an abelian Drinfeld twist \cite{dr1,dr2} have been studied in \cite{lu} with the introduction of the corresponding quantum Cartan algebras and related twisted Weil and Cartan models for noncommutative equivariant cohomology.  In \cite{cpz} we aim to study  equivariant cohomology models for FRT bialgebras \cite{frt} and their dual class of Drinfeld-Jimbo algebras \cite{dr4, jim}, respectively quantum deformations of the coordinate algebra and universal enveloping algebra of classical Lie groups.  \\

The structure of the paper is the following. In section \ref{s:qca} we introduce a  $\Z_{2}$-graded bialgebra $\cart_{\Gamma}$, the \textit{quantum Cartan algebra}, associated to any  bicovariant first order differential calculus $(\Gamma, \dd)$ \`a la Woronowicz on a
quantum group. Under suitable conditions on the calculus, $\cart_{\Gamma}$ has a Hopf algebra structure.  In section \ref{s:qcc} we study the representations of $\cart_{\Gamma}$ in terms of left and right Lie and inner derivative operators acting on the external algebra $\Gamma^{\wedge}$ of the calculus. The last section contains the example of $\cartsu$, the quantum Cartan algebra associated to Woronowicz's $4D_{+}$ differential calculus on $SU_{q}(2)$.

\section{The quantum Cartan algebra}
\label{s:qca}

The aim of this section is to associate to any (suitable) first order differential calculus $(\Gamma, \dd)$  on a quantum group an Hopf algebra $\cart_\Gamma$, later referred to as the quantum Cartan algebra. We begin with a brief introductory subsection  to Woronowicz's theory.

\bigskip

\subsection{An outline of the theory of bicovariant differential calculi on  quantum groups}
\label{ss:rev}
We recall  few  basic aspects of the theory of bicovariant differential calculi on quantum groups. The aim is to fix the notations and introduce the objects used later in this paper; we refer the reader to the original paper \cite{wor} or  \cite[part IV]{ks}.
In what follows, summation over repeated indices is understood and we make use of Sweedler and Sweedler-like notations for coproduct and coactions.

Consider a unital Hopf algebra $(\A;\Delta,\varepsilon,S)$, with invertible antipode, endowed with a first order differential calculus $(\Gamma, \dd)$, where  $\dd:\A \rw \Gamma$ is a linear map which satisfies the Leibniz rule $\dd(xy)= (\dd x)y + x (\dd y)$, $x,y \in \A$ and $\Gamma$ is an $\A$-bimodule whose elements are of the form $\rho=x_k \dd y_k$, for $x_k,y_k \in \A$.

A first order differential calculus $(\Gamma, \dd)$ is left covariant  provided there exists a left $\A$-coaction  $\dg:\Gamma \rw \A \ot \Gamma$ such that  $\dg(x \dd y)= \Delta(x)(id \ot \dd) \Delta(y)$  for any
 $x, y \in \A$; analogously it is right covariant provided there exists a right $\A$-coaction $\gd: \Gamma \rw \Gamma \ot \A$ such that $\gd(x\dd y)=\Delta(x)(\dd\otimes id)\Delta(y)$ for any $x,y\in\,\A$. A first order differential calculus is then bicovariant provided it is both left and right covariant.
 Bicovariant differential calculi $(\Gamma, \dd)$ on ${A}$   are in one to one correspondence with right ideals $R_\Gamma\subset\ker\varepsilon$ which are invariant with respect to the (right) adjoint coaction of $\A$ on itself: $Ad(R_\Gamma)\subset R_\Gamma \ot \A $ where $Ad(x):= x_{(2)} \ot S(x_{(1)})x_{(3)}$ for any $x \in \A$ (\cite[Thm. 1.8]{wor} ). There is a left $\A$-module isomorphism $\Gamma\simeq\A\otimes(\ker\varepsilon/R_{\Gamma})$, and we refer to the dimension $N$ of the vector space $\ker\varepsilon/R_{\Gamma}$ as the dimension of the calculus, assumed to be finite in what follows. 

The triple $(\Gamma, \dg, \, \gd)$ is a bicovariant bimodule over $\A$.
 $\Gamma$ admits a free  $\A$-bimodule  basis of left-invariant elements $\omega_i \in \Gamma$ (that is
$\dg (\omega_i)= 1 \ot \omega_i$), with $i=1,\dots ,N$ and in analogy a free  $\A$-bimodule basis of right invariant elements $\eta_{i}\in\,\Gamma$ (i.e. such that $\gd(\eta_{i})=\eta_{i}\otimes 1$), $i=1,\ldots,N$.  Denote  $\linv\subset\Gamma$ (resp. $\rinv\subset\Gamma$) the subspace  of left (resp. right) invariant elements: one has that $dim(\linv)= dim (\rinv) = dim (\ker\varepsilon/R_{\Gamma})$.

The dual space $\A^{\prime}$ of functionals on $\A$ has natural left and right  actions on $\A$  given by $f \la x := x_{(1)} \cdot f (x_{(2)})$ and $x\ra f:=f(x_{(1)})\cdot x_{(2)}$ for any $f\in\,\A^{\prime},\,x\in\,\A$  induced by the pairing between $\A'$ (with the algebra structure $fg(x):=f(x_{(1)})g( x_{(2)})$) and the coalgebra $\A$  given by the evaluation of functionals
$\langle f, x \rangle := f(x)$.

The fundamental theorem for bicovariant bimodules \cite{wor} says that, considered an $N$-dimensional bicovariant bimodule
$\Gamma$ over $\A$ and
 given a basis $\o_{i}\in ~\linv\subset\Gamma$, there exist elements  $J_{ij}\in\,A$ and functionals
$f_{ij}\in\,\A^{\prime}$
, $i,j=1,\ldots,N$ such that for $x,y\in\,\A$:
\begin{enumerate}
\item  the $\A$-bimodule structure is given by
\begin{equation}
\label{fun_f}
\omega_i x= (f_{ij} \la x) \omega_j \, , \qquad   \qquad x \, \omega_i = \omega_j  ((f_{ij}\circ S^{-1}) \la x) ~ ;
\end{equation}
\item $\gd(\o_{i})=\o_j\otimes J_{ji}$~ ;
\item the $f_{ij}$ form an algebra representation of $\A$:
\beq
f_{ij}(xy)=f_{ik}(x)f_{kj}(y), \qquad f_{ij}(1)=\delta_{ij}
\label{fijxy}
\eeq
 where $1$ denotes the identity of $\A$;
\item $J=\{J_{ij}\}$ defines a matrix corepresentation of $\A$, i.e.:
\beq
\co(J_{ij})=J_{ik}\otimes J_{kj}, \qquad \varepsilon(J_{ij})=\delta_{ij} ~ ;
\label{coJ}
\eeq
\item the following  identity holds
\beq
J_{ki}(x\ra f_{kj})=(f_{ik}\la x)J_{jk} 
\label{coJf}
\eeq
\end{enumerate}
for $i,j,k= 1,\ldots,N$.

This theorem also proves an inverse of the above results, thus giving a characterisation of bicovariant bimodules (\cite{ks} \S 13.1).
 Moreover the theorem states that the elements $\e_j:= \o_i S(J_{ij})$ form a basis for $\rinv$. In terms of such a basis, the $\A$-bimodule structure of $\Gamma$ can be written as \cite{ac}:
\begin{equation}\label{fun_f_r}
\e_i x= (x \ra (f_{ij}\circ S^{-2})) \e_j \, ,  \qquad x \, \e_i = \e_j  (x \ra (f_{ij}\circ S^{-1})) 
\end{equation}
for any $x \in \A$.\\

\noindent\textsf{The quantum tangent space. \quad}
A central role in what follows will be played by the elements of the quantum tangent space $\T_\Gamma$ associated to the $N$-dimensional calculus $(\Gamma, \dd)$.
By definition, $\T_\Gamma$, simply denoted $\T$ in the following,  is the vector space
\begin{equation}
\T := \left\{ X \in \A' \,|\, X(1)=0 \mbox{ and } X(x)=0, \, \forall x \in R_\Gamma \right\}
\end{equation}
where $R_\Gamma\subset\ker\varepsilon$  characterizes the calculus. There exists a unique bilinear form
$\hp{~}{~}\; : \T\times\Gamma \rightarrow \C$ such that
\beq
\hp{X}{x\dd y} = \epsilon(x)\langle X,y \rangle  = \epsilon(x)X(y)
\label{dptg}
\eeq
 and with respect to this bilinear form the vector spaces $\T$ and $\linv$ form a  non-degenerate dual pairing, so that $dim(\T)= dim(\ker\varepsilon/R_{\Gamma})=N$. Being $N$
finite, the elements in $\T$ indeed belong to the dual Hopf algebra of $\A$ \cite[\S 1.2.8]{ks}.
Let us fix $\{X_i\}$ to be the  basis of $\T$ dual to the basis
$\{ \o_i \}$ of $\linv$ with respect to the pairing. It turns out that for any $x \in \A$,
the differential $\dd x$ can be written as 
\beq
\dd x= (X_i\la x)\, \o_i=-\e_i(x\ra S^{-1}(X_{i})) \, .
\label{ddx}
\eeq
The identity $\dd x=-\e_i(x\ra S^{-1}(X_{i}))$ can be proved using the formula
$(X_k \la x) J_{ik}= x \ra X_i$,  $x \in \A$ (see \cite[eq.(2.3.23)]{paolo}),
and the explicit
expression \eqref{antip-inv} of the inverse of the antipode: $S^{-1}(X_k)= - S^{-1}(f_{lk})  X_l$. Indeed applying the antipode,
 the former equation gives  $x \ra S^{-1}(X_j)= J_{ji} (S^{-1}(X_i) \la x)$, so that using \eqref{fun_f}, we get
 $$
 \begin{array}{ll}
 \dd x&=(X_i \la x) \o_i= X_i(x_{(2)}) \o_j \left((f_{ij}\circ S^{-1}) \la x_{(1)}\right)= (S^{-1}(f_{ij})X_i)(x_{(2)}) \o_j x_{(1)} \\
 &= -\o_j S^{-1} (X_j)(x_{(2)})  x_{(1)}= -\e_k J_{kj} (S^{-1}(X_j)\la x)=
 - \e_k (x \ra S^{-1}(X_k)) \; .
 \end{array}
 $$

The elements $X_i$, $R_{i}:=-S^{-1}(X_{i})$
 acting from the left and from the right on $\A$ satisfy
 the following quantum Leibniz rule
\begin{align}
&X_{i}\la(xy)=x (X_{i}\la y)+(X_{j}\la x)(f_{ji}\la y),\nn \\
&(xy)\ra R_{i}=(x\ra R_{i})y+(x\ra S^{-1}(f_{ji}))(y\ra R_{j}) \; .
\label{qLe}
\end{align}
\\

\noindent
\textsf{The braiding and higher order calculi. \quad} Bicovariant  bimodules over $\A$ form a braided category $\M$. Given two objects in the category, i.e. two bicovariant bimodules $\Gamma_1,\, \Gamma_2$ over $\A$, an element $\psi \in Mor(\Gamma_1,\Gamma_2)$ is a map $\psi: \Gamma_1 \rw \Gamma_2$ such that $\psi(a \rho b)=a \psi(\rho) b$ for all $a,\, b \in \A$ and $\rho \in \Gamma_1$ and such that $\psi$ intertwines the left and right coactions:
$$
(id \ot \psi)\circ \Delta_{\Gamma_1}= \Delta_{\Gamma_2} \circ \psi \, ,
\quad (\psi \ot id)\circ \, _{\Gamma_1}\Delta = \, _{\Gamma_2}\Delta \circ \psi  \, .
$$
The tensor product $\Gamma_1 \ot_\A \Gamma_2$  still belongs to $Obj(\M)$, as for  the tensor product  of a finite number of bicovariant bimodules. The bimodule structure and left and right coactions are introduced in a natural way, see e.g. \cite[\S13.1.4]{ks}.  In the following we omit the subscript $\A$ in $\ot_\A$ and write $\Gamma^{\ot 2}$ for $\Gamma \ot_\A \Gamma$.
For any bicovariant bimodule $\Gamma$,  there exists a unique morphims $\sigma \in Mor(\Gamma^{\ot 2}, \, \Gamma^{\ot 2})$ such that $\sigma(\omega \ot \eta)= \eta \ot \omega$ for any $\omega \in \linv$, $\eta \in \rinv$.
If $\o_i \in \linv$, $i=1, \dots ,N$ denotes a basis of  $\Gamma$, then
$\o_i \ot \o_j$, $i,j=1, \dots N$ form a basis of $\Gamma^{\ot 2}$.  On this basis, the braiding reads   $\sigma(\o_i\otimes\o_j)=\sigma^{kl}_{ij} \, \o_k\otimes\o_l$ with components $\sigma^{kl}_{ij}= f_{il}(J_{kj})$, while on the  right invariant forms $\e_j:= \o_i S(J_{ij})$ the identity
$\sigma(\e_i\otimes\e_j)=\sigma^{lk}_{ji} \, \e_k \otimes \e_l$ does hold (see \cite{wor}, \S 3).
\\

The braiding allows for the introduction of a quantum Lie algebra structure in the quantum tangent space.
For any $X, X^{\prime}$ in $\T$, define a linear functional on $\A$ by setting $[X,X^{\prime}](x)=\hs{X\otimes X^{\prime}}{\mathrm{Ad}(x)}$  with $x\in\,\A$. The property of bicovariance of the calculus ensures that the functional $[X,X^{\prime}]\in\,\T$, and that it satisfies a  Jacobi identity  $[X,[X^{\prime},X^{\prime\prime}]]=[[X,X^{\prime}],X^{\prime\prime}]-\sum_{n}[[X,T_{n}],T^{\prime}_{n}]$ where $\sigma^{t}(X^{\prime}\otimes X^{\prime\prime})=\sum_{n}T_{n}\otimes T^{\prime}_{n}$. In this expression, the map $\sigma^{t}$ on $\T\otimes\T$ is the transpose of the braiding, induced through the pairing between $\T$ and $\linv$ by imposing   $\hp{\theta\otimes\theta^{\prime}}{\sigma^{t}(X\otimes X^{\prime})}=\hp{\sigma(\theta\otimes\theta^{\prime})}{X\otimes X^{\prime}}$. In components $\sigma^t(\x{i}\ot \x{j})= (\sigma^t)^{kl}_{ij} ~\x{k} \ot \x{l}=  \sigma_{kl}^{ij}~\x{k} \ot \x{l} $.
For any element in $\T\otimes\T$ such that  $\sigma^{t}(\sum_{n}T_{n}\otimes T^{\prime}_{n})=\sum_{n}T_{n}\otimes T^{\prime}_{n}$, one has $\sum_{n}[T_{n},T^{\prime}_{n}]=0$, which can be read as an antisimmetry of the commutator.
On the basis $\{X_{i}\}$ in $\T$ this commutator acquires the form:

\beq
[X_{i},X_{j}]=X_{i}X_{j}-\sigma_{nk}^{ij}X_{n}X_{k}=\hs{X_{j}}{J_{im}}X_{m} \, .
\label{ja}
\eeq

\noindent 
Defined the $t_{ij}^{kl} \in \C $ such that $\ker(1-\sigma^t)$ is generated by elements $X_i\otimes X_j + t^{ij}_{kl} \, X_k\otimes X_l$ (with  $t^{ij}_{ij}=0$) it is
\begin{equation}
\label{t}
t^{ij}_{kl} - \sigma^{ij}_{kl} + \delta_{ik}\delta_{jl} - t^{ij}_{mn}\sigma^{mn}_{kl} = 0 \; .
\end{equation}
An explicit expression for $t$ can be found by direct computation once the calculus $\Gamma$ is chosen; later we shall present the Woronowicz's $4D_+$ calculus \cite{wor2} on $SU_q(2)$ as an example.
\\
The following identities hold
\begin{equation} \label{ff}
\sigma_{ij}^{rs} f_{rn} f_{sk} = f_{ir} f_{js} \sigma_{rs}^{nk}
\end{equation}
\begin{equation}\label{relt}
t_{ij}^{rs} f_{rn} f_{sk} = f_{ir} f_{js} t_{rs}^{nk} .
\end{equation}
Indeed the result is valid in more generality, being a condition that every morphism of $\Gamma^{\ot 2}$ has to satisfy as a compatibility condition with the $\A$-bimodule structure.
 Suppose $S$ in an automorphism of  $\linv^{\ot 2}$ with $S(\omega_{i}\ot \omega_{j})= S_{ij}^{kl} \omega_{k} \ot \omega_{l}$; we can then extend $S$ to $\Gamma\ot_{\A}\Gamma$ by $S(x \o_i\ot\o_j y): = x\, S(\o_i\ot \o_j) \, y$ for $x, \, y \in \A$. Then applying $S$ to both sides of the identity
$
\o_i \ot \o_j x = (f_{ir}f_{js} \la x) \o_r\ot \o_s
$
for $x \in \A$,  we get the result
$
S_{ij}^{rs} f_{rn} f_{sk} = f_{ir} f_{js} S_{rs}^{nk} \; .
$\\

In terms of the braiding $\sigma$ it is also possible to construct higher order differential calculi  from $(\Gamma,\dd)$.
The 'standard' exterior algebra defined by Woronowicz \cite{wor} is obtained as the quotient $\Gamma^{\wedge}_{\mathcal{\scriptscriptstyle{W}}}=\bigoplus_n (\Gamma^{\otimes^n} / \ker A_n)
$, where   $A_{n}:\Gamma^{\otimes^n}\to\Gamma^{\otimes^n}$ is a quantum antisymmetriser defined from the braiding.  A different  exterior  algebra can be introduced by considering the graded algebra defined as the quotient of the tensor algebra of $\Gamma$ by the two-sided ideal generated by $\ker(id-\sigma)$: $\ext:= \Gamma^{\otimes} / \langle \ker (id-\sigma) \rangle$
; one has $\Gamma^{\wedge}_{\mathcal{\scriptscriptstyle{W}}}\subset\Gamma^{\wedge}$ (see \cite[\S13.2.2]{ks}).  The differential is extended to $\Gamma^{\wedge}$  as the only degree one derivation such that $\dd^{2}=0$. Such an  exterior algebra $\Gamma^{\wedge}$ has  natural left and right $\A$-comodule structures, consistently given by recursively setting:
\beq
\dg(x\dd\alpha)=\Delta(x)(id\otimes\dd)\dg(\alpha),\qquad\qquad\gd(x\dd\alpha)=\Delta(x)(\dd\otimes id)\gd(\alpha)
\label{lrcoe}
\eeq
on any $\alpha\in\,\Gamma^{\wedge}$.  In our analysis, we shall consider this latter exterior algebra $\Gamma^{\wedge}$.

\subsection{The Hopf algebra $\cart_\Gamma$}
 
In this section we retain the notation  introduced before and consider $(\Gamma, \dd)$  an $N$-dimensional bicovariant calculus on a Hopf algebra $A$. We denote by $\h$ the dual Hopf algebra of $A$. In  the following, indices $i,j,\dots$ belong to $\{ 1,\dots, N\}$.  

 Fixed a free $\A$-bimodule
basis of left invariant elements $\o_{i}\in\,\linv\subset\Gamma$, the bimodule structure is uniquely determined  by  elements $f_{ij}\in\,\h$ and $J_{ij}\in\,A$ satisfying the conditions \eqref{fijxy}, \eqref{coJ}, \eqref{coJf}, while the elements $\omega_{i}$ select a quantum tangent space  with dual basis elements  $X_{i}\in\,\T\,\subset\h$. The relations expressed in \eqref{fijxy} together with \eqref{qLe} can be now
expressed in terms of the coalgebra and counity of $\h$ as
\begin{align}
\Delta(f_{ij})=f_{ik}\otimes f_{kj}\; ,\qquad\varepsilon(f_{ij})=\delta_{ij} \; , \nn \\
\Delta(X_{i})=1\otimes X_{i}\,+\,X_{j}\otimes f_{ji}\; , \qquad \varepsilon(X_{i})=0 \; .
\label{neco}
\end{align}
The subalgebra $\even\subset\h$ with generators $\{X_{i},f_{jk}\}$ associated to the bicovariant bimodule $\Gamma$  is characterised by the relations
\begin{align}
&X_i X_j-\sigma^{ij}_{kl}X_{k}X_{l}=C^{k}_{ij}X_{k} \; , \nn \\
&\sigma^{ij}_{nm}f_{ip}f_{jq}=f_{ni}f_{mj}\sigma^{pq}_{ij}\; , \nn \\
&C^{i}_{mn}f_{mj}f_{nk}+f_{ij}X_{k}=\sigma^{jk}_{pq}X_{p}f_{iq}+C^{l}_{jk}f_{il}\; ,\nn \\
&X_{k}f_{nl}=\sigma^{kl}_{ij}f_{ni}X_{j} \; ,
\label{quat}
\end{align}
where $C_{ij}^k:=\hs{X_{j}}{J_{ik}}=X_{j}(J_{ik})$ (see \cite{ac} for a proof of the third identity). Given
 \eqref{neco}, $\even$ turns out to be the sub-bialgebra in $\h$ encoding the structure of the bicovariant differential calculus
$(\Gamma, \dd)$ \cite{ber}.

\begin{rem}
The bialgebra structure of the quantum tangent space $\T$ associated to a bicovariant differential calculus on a Hopf algebra $A$ has been also studied in \cite{majid}. In that paper a different approach is followed; instead of considering the algebra $\even$, which contains also the elements $f_{ij}$, the authors look for a coproduct in the enveloping algebra of $\T$, thus working only with the generators $X_i$. They are able to define a \textit{braided} bialgebra structure on the enveloping algebra of the quantum tangent space, provided that the Hopf algebra $A$ is co-quasitriangular.
\end{rem}

In order to obtain a quantum version of the (universal enveloping algebra of the) classical Cartan algebra described in the previous section, we extend the algebra $\even$ giving the following

\begin{defi}
\label{defC}
The quantum Cartan algebra associated to an $N$-dimensional bicovariant differential calculus $\Gamma$ over $\A$ is the $\Z_2$-graded algebra $\cart_{\Gamma}$ generated over $\C$ by even elements $\{X_i,f_{jk}\}$ and odd elements $\{\xi_i,\de\}$ $(i,j,k=1,\ldots ,N)$, with relations \eqref{quat} among the even generators, and
\begin{equation}
\label{relcart}
\begin{array}{lllll}
 \xi_i\xi_j + t^{ij}_{kl}\, \xi_k \xi_l =0, &\qquad&   \xi_i \, \de + \de\, \xi_i = X_i, &\qquad& \de\de=0 \; ,\\
\xi_if_{kj} - \sigma^{ij}_{nm}f_{kn}\xi_m = 0, &\qquad&  \xi_iX_j - \sigma^{ij}_{kl}\, X_k\xi_l = C_{ij}^k\xi_k \; ,& &  \\
f_{ij}\de-\de f_{ij} = 0 , & \qquad &
X_i \, \de - \de \, X_i = 0. & &
\end{array}
\end{equation}
\end{defi}
\noindent Isomorphic calculi give rise to quantum Cartan algebras which are isomorphic.
\\
The next step is to introduce a bialgebra structure on $\cart_{\Gamma}$, extending that of  $\even$.
\begin{prop}
\label{biC}
The maps $\co:\cart_{\Gamma}\rightarrow\cart_{\Gamma}\otimes\cart_{\Gamma}$ and $\varepsilon: \cart_{\Gamma}\rightarrow\C$ defined on generators as
\begin{equation}
\label{coprC}
\begin{array}{lll}
\co(X_i) = 1\otimes X_i + X_j\otimes f_{ji}\; , & \qquad  & \co(f_{ij}) = f_{ik}\otimes f_{kj}\; , \\
\co(\xi_i) = 1\otimes \xi_i + \xi_j\otimes f_{ji}\; , & \qquad  & \co(\de) = 1\otimes \de + \de\otimes 1\; , \\
\varepsilon(X_i)= \varepsilon(\xi_i) = \varepsilon(\de) = 0\; , & \qquad  & \varepsilon(f_{ij})=\delta_{ij}
\end{array}
\end{equation}
and extended as $\Z_2$-graded algebra  homomorhisms define a bialgebra structure on $\cart_{\Gamma}$.
\end{prop}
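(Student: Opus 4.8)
The plan is to verify, in this order, that the assignments in \eqref{coprC} (a) respect the $\Z_2$-grading and the defining relations of $\cart_\Gamma$, so that they extend to $\Z_2$-graded algebra homomorphisms $\co$ and $\varepsilon$; (b) that $\co$ is coassociative; and (c) that $\varepsilon$ is a counit for it. By Definition~\ref{defC}, $\cart_\Gamma$ is the free $\Z_2$-graded algebra on $\{X_i,f_{jk},\xi_i,\de\}$ modulo \eqref{quat} and \eqref{relcart}; once (a) is settled, (b) and (c) reduce by the universal property to identities on generators, since $(\co\ot\mathrm{id})\co$, $(\mathrm{id}\ot\co)\co$, $(\varepsilon\ot\mathrm{id})\co$ and $(\mathrm{id}\ot\varepsilon)\co$ are all $\Z_2$-graded algebra maps out of $\cart_\Gamma$. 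Observe first that the generators carry a well-defined parity ($X_i,f_{jk}$ even, $\xi_i,\de$ odd), that the right-hand sides of \eqref{coprC} have the matching parity in $\cart_\Gamma\ot\cart_\Gamma$, and that $\varepsilon$ vanishes on the odd generators, so parity is respected.

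\textbf{Well-definedness.} The relations \eqref{quat} are precisely the defining relations of the sub-bialgebra $\even\subset\h$, and on the even generators $\co$ and $\varepsilon$ coincide with the coproduct and counit of $\h$ (cf. \eqref{neco}); hence those relations are automatically preserved. It remains to check the relations \eqref{relcart}. For $\varepsilon$ this is immediate, as $\varepsilon(X_i)=\varepsilon(\xi_i)=\varepsilon(\de)=0$ and $\varepsilon(f_{ij})=\delta_{ij}$. For $\co$ one expands both sides of each relation in $\cart_\Gamma\ot\cart_\Gamma$ using the Koszul rule $(a\ot b)(c\ot d)=(-1)^{|b||c|}ac\ot bd$, and checks that, after grouping the resulting terms by their leg structure, what survives is a combination of (i) the defining relations \eqref{quat}, \eqref{relcart} themselves, (ii) the bimodule-morphism compatibilities \eqref{ff} and \eqref{relt}, and (iii) the quadratic consistency identity \eqref{t}, each of which vanishes. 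The cleanest case is the Cartan identity: from \eqref{coprC} one gets $\co(\xi_i)\co(\de)+\co(\de)\co(\xi_i)=1\ot(\xi_i\de+\de\xi_i)+\xi_j\ot(f_{ji}\de-\de f_{ji})+(\xi_j\de+\de\xi_j)\ot f_{ji}$ after the cross-terms $\mp\,\de\ot\xi_i$ cancel; applying $\xi_a\de+\de\xi_a=X_a$ and $f_{ji}\de=\de f_{ji}$ this equals $1\ot X_i+X_j\ot f_{ji}=\co(X_i)$. The relations $\de\de=0$, $f_{ij}\de=\de f_{ij}$, $X_i\de=\de X_i$ are equally short, and $\xi_if_{kj}=\sigma^{ij}_{nm}f_{kn}\xi_m$ uses \eqref{ff}; the quadratic relation $\xi_i\xi_j+t^{ij}_{kl}\xi_k\xi_l=0$ and the mixed relation $\xi_iX_j-\sigma^{ij}_{kl}X_k\xi_l=C^k_{ij}\xi_k$ are the ones that genuinely invoke \eqref{t} and \eqref{relt}.

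\textbf{Coassociativity and counit.} On the even generators these hold because $\co|_{\even}$ and $\varepsilon|_{\even}$ are the structure maps of the Hopf algebra $\h$. The generator $\de$ is primitive, so both axioms are trivial on it. For $\xi_i$ the coproduct in \eqref{coprC} is formally identical to that of $X_i$, so the checks $(\co\ot\mathrm{id})\co(\xi_i)=(\mathrm{id}\ot\co)\co(\xi_i)$ and $(\varepsilon\ot\mathrm{id})\co(\xi_i)=\xi_i=(\mathrm{id}\ot\varepsilon)\co(\xi_i)$ repeat verbatim those for $X_i$, and use only $\Delta(f_{ji})=f_{jk}\ot f_{ki}$ and $\varepsilon(f_{ji})=\delta_{ji}$. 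As the maps being compared are graded algebra homomorphisms agreeing on a generating set, they agree everywhere; together with well-definedness this exhibits $(\cart_\Gamma,\co,\varepsilon)$ as a $\Z_2$-graded bialgebra.

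\textbf{Expected main obstacle.} The one genuinely delicate point is the well-definedness of $\co$ on the quadratic relation $\xi_i\xi_j+t^{ij}_{kl}\xi_k\xi_l=0$, and likewise on $\xi_iX_j-\sigma^{ij}_{kl}X_k\xi_l=C^k_{ij}\xi_k$: here the Koszul signs, the commutation rules between the $f_{ij}$ and the $\xi_i$ (resp. the $X_i$), the identity \eqref{t} tying together $t$, $\sigma$ and the $\de$-square, and the morphism compatibilities \eqref{ff}, \eqref{relt} must all be used simultaneously, and the cancellations, which do take place leg by leg, only close up once the index contractions are carried out in the right order.
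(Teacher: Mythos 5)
Your proposal takes essentially the same route as the paper: counit and coassociativity are immediate on generators, and the real content is the compatibility of $\co$ with \eqref{quat} and \eqref{relcart}, which the paper establishes by showing that the coproduct of each relation element ($\mathfrak{a}_{ijk}$, $\mathfrak{b}_{ij}$, $\mathfrak{d}_{ij}$, etc.\ of \eqref{ABD}) reproduces itself tensored with $f$'s, using exactly the identities you name. The only differences are ones of completeness and attribution: you write out only the Cartan-identity case and assert the $\xi_i\xi_j$ and $\xi_iX_j$ cases (which the paper computes in full), and the mixed relation $\mathfrak{b}_{ij}$ actually closes via the third relation in \eqref{quat} rather than via \eqref{t}, \eqref{relt}, which are needed only for $\mathfrak{d}_{ij}$.
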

\begin{proof}
It is evident that the identity  $(\varepsilon\otimes id)\co = id = (id\otimes\varepsilon)\co$ holds on generators of $\cart_{\Gamma}$; a further explicit computation, only using the identity $\co(f_{ij})=f_{ik}\otimes f_{kj}$, proves that the coproduct $\co$ is coassociative.  The nontrivial part of the statement is the compatibility of the coproduct with the relations \eqref{quat} and \eqref{relcart}.
We set
\begin{align}
&\mathfrak{a}_{ijk} := \xi_if_{jk} - \sigma^{ik}_{nm}f_{jn}\xi_m \; , \nn \\
& \mathfrak{b}_{ij} := \xi_i X_j - \sigma^{ij}_{kl}\, X_k\xi_l - C_{ij}^k\xi_k\; , \nn \\
&\mathfrak{d}_{ij} := \xi_i\xi_j + t^{ij}_{kl}\, \xi_k\xi_l \; , \nn \\
&\mathfrak{e}_{i}:=\xi_{i}\de+\de\xi_{i}-X_{i} \; , \nn \\
&\mathfrak{m}_{ij}:=f_{ij}\de-\de f_{ij} \; , \nn \\
&\mathfrak{z}_{i}:=X_{i}\de-\de X_{i} \; , \nn \\
&\mathfrak{t}:=\de^{2} \; .
\label{ABD}
\end{align}
Taking into account the $\Z_2$-grading on the tensor product algebra $\cart_{\Gamma}\otimes\cart_{\Gamma}$, so that
$$(a_1\otimes a_2)\cdot(b_1\otimes b_2)=(-1)^{|a_2||b_1|} a_1b_1\otimes a_2b_2 \qquad \forall \, a_i,b_i \, \in \cart$$
we compute directly the coproduct of $\mathfrak{a}_{ikj}$ at first:

\begin{equation*}
\begin{split}
\co(\mathfrak{a}_{ijk}) & =(1\otimes\xi_i + \xi_h \otimes f_{hi})(f_{jm}\otimes f_{mk} ) - \sigma^{ik}_{nm} (f_{jh}\otimes f_{hn})(1\otimes\xi_m + \xi_l \otimes f_{lm}) \\
 & = f_{jh}\otimes \xi_if_{hk} + \xi_h f_{jm}\otimes f_{hi}f_{mk} - \sigma^{ik}_{nm}f_{jh}\otimes f_{hn}\xi_m - \sigma^{ik}_{nm}f_{jh}\xi_l \otimes f_{hn}f_{lm} \\
 & = f_{jh}\otimes (\xi_i f_{hk} - \sigma^{ik}_{nm} f_{hn}\xi_n) + \xi_m f_{jn} \otimes f_{mi}f_{nk} - \sigma^{nm}_{hl} f_{jh}\xi_l \otimes f_{ni}f_{mk} \\
 & = f_{jh} \otimes \mathfrak{a}_{ihk} + (\xi_h f_{jl} - \sigma^{hl}_{mn}f_{jm}\xi_n) \otimes f_{hi}f_{lk} \\
 & = f_{jh}\otimes \mathfrak{a}_{ihk} + \mathfrak{a}_{mjn}\otimes f_{mi}f_{nk} \, .
\end{split}
\end{equation*}
This proves that $\mathfrak{a}_{ikj}=0$ implies $\co(\mathfrak{a}_{ikj})=0$, i.e.   the coproduct defined by \eqref{coprC} on generators of $\cart_{\Gamma}$ is  consistent with the algebraic relation $\mathfrak{a}_{ijk}=0$ in $\cart_{\Gamma}$.  Similarly:
\begin{equation*}
\begin{split}
\co(\mathfrak{b}_{ij}) & = (1\otimes\xi_i + \xi_m \otimes f_{mi})(1\otimes X_j+X_n\otimes f_{nj}) - \sigma^{ij}_{hp}(1\otimes X_h + X_r\otimes f_{rh})(1\otimes \xi_p + \xi_s\otimes f_{sp})  \\ & \phantom{=} \, - C_{ij}^k(1\otimes \xi_k+\xi_t\otimes f_{tk}) \\
 & = 1\otimes \xi_i X_j + X_n\otimes\xi_i f_{nj} + \xi_m\otimes f_{mi}X_j + \xi_m X_n\otimes f_{mi}f_{nj}  \\
 & \phantom{=} \, - \sigma^{ij}_{hl} (1\otimes X_h\xi_l + \xi_s\otimes X_h f_{sl} + X_r\otimes f_{rh}\xi_l + X_r\xi_s \otimes f_{rh}f_{sl} - C_{ij}^k(1\otimes \xi_k+\xi_t\otimes f_{tk}) \\
 & = 1\otimes \mathfrak{b}_{ij} + \xi_l \otimes (f_{li}X_j - \sigma^{ij}_{mn}X_m f_{ln} - C_{ij}^k f_{lk}) + \xi_m X_n \otimes f_{mi}f_{nj} - \sigma^{mn}_{rs} X_r\xi_s\otimes f_{mi}f_{nj} \\
 & = 1\otimes \mathfrak{b}_{ij} + \mathfrak{b}_{mn}\otimes f_{mi}f_{nj} + \xi_l \otimes (f_{li}X_j - \sigma^{ij}_{mn}X_m f_{ln} - C_{ij}^k f_{lk} + C_{mn}^l f_{mi}f_{nj}) \\
 & = 1\otimes \mathfrak{b}_{ij} + \mathfrak{b}_{mn}\otimes f_{mi}f_{nj}
\end{split}
\end{equation*}
where the last equality comes from the third relation in \eqref{quat}.
Using \eqref{relt} we can compute $\co(\mathfrak{d}_{ij})$:
\begin{equation*}
\begin{split}
\co(\mathfrak{d}_{ij}) & = (1\otimes\xi_i + \xi_r\otimes f_{ri})(1\otimes\xi_j + \xi_s\otimes f_{sj}) + t^{ij}_{kl} \,(1\otimes\xi_k +\xi_r\otimes f_{rk})(1\otimes\xi_l + \xi_s\otimes f_{sl}) \\
 & = 1\otimes\xi_i\xi_j - \xi_s\otimes\xi_i f_{sj} + \xi_r\otimes f_{ri}\xi_j + \xi_r\xi_s\otimes f_{ri}f_{sj}  \\
 & \phantom{=} \, + t^{ij}_{kl}\,( 1\otimes\xi_k\xi_l - \xi_s\otimes \xi_k f_{sl} + \xi_r\otimes f_{rk}\xi_l + \xi_r\xi_s\otimes f_{rk}f_{sl} ) \\
 & = 1\otimes \mathfrak{d}_{ij} + \xi_s\otimes (t^{ij}_{kl} - \sigma^{ij}_{kl} + \delta_{ik}\delta_{jl} - t^{ij}_{mn}\sigma^{mn}_{kl}) f_{sk}\xi_l + \xi_r\xi_s\otimes f_{ri}f_{sj} + t_{rs}^{kl}\xi_r\xi_s\otimes f_{ki}f_{lj} \\
 & = 1\otimes \mathfrak{d}_{ij} + \mathfrak{d}_{kl}\otimes f_{ki}f_{lj}
\end{split}
\end{equation*}
where we used \eqref{t} as well.
The structure of the proof is now clear, and along the same lines it is possible to prove:
\begin{align}
&\co(\mathfrak{e}_{i})=1\otimes \mathfrak{e}_{i}+\mathfrak{e}_{j}\otimes f_{ji}, \nn \\
&\co(\mathfrak{m}_{ij})=f_{ik}\otimes \mathfrak{m}_{kj}+\mathfrak{m}_{ik}\otimes f_{kj}, \nn \\
&\co(\mathfrak{z}_{i})=1\otimes \mathfrak{z}_{i}+\mathfrak{z}_{k}\otimes f_{ki}, \nn \\
&\co(\mathfrak{t})=0.
\label{altri}
\end{align}
Having defined $\cart_{\Gamma}$ as an extension of the coalgebra $\even$, the compatibility of the coproduct on even generators with  relations \eqref{quat} is already assured. We can nevertheless prove it as done for odd generators. We do not give the details of the computations but only list the relevant identities:

\beq
\begin{array}{ll}
\mathfrak{q}_{ij}=X_{i}X_{j}-\sigma^{ij}_{kl}X_{k}X_{l}-C^{k}_{ij}X_{k},  \qquad & \co(\mathfrak{q}_{ij})=1\otimes \mathfrak{b}_{ij}+\mathfrak{b}_{rs}\otimes f_{ri}f_{sj}; \\
\mathfrak{w}_{mnpq}=\sigma^{ij}_{mn}f_{ip}f_{jq}-f_{ni}f_{mj}\sigma_{ij}^{pq}, & \co(\mathfrak{w}_{mnpq})=\mathfrak{w}_{mnij}\otimes f_{ip}f_{jq}+f_{mi}f_{nj}\otimes\mathfrak{w}_{ijpq}; \\
\mathfrak{y}_{knl}=X_{k}f_{nl}-\sigma^{kl}_{ij}f_{ni}X_{j}, & \co(\mathfrak{y}_{knl})=f_{nm}\otimes\mathfrak{y}_{kml}+\mathfrak{y}_{mns}\otimes f_{mk}f_{sl}; \\
\mathfrak{r}_{ijk}=C^{i}_{mn}f_{mj}f_{nk}+f_{ij}X_{k}-\sigma^{jk}_{pq}X_{p}f_{iq}-C^{l}_{jk}f^{il}, & \co(\mathfrak{r}_{ijk})=f_{im}\otimes\mathfrak{r}_{mjk}+\mathfrak{r}_{imn}\otimes f_{mj}f_{nk} \, .
\end{array}
\label{conG}
\eeq
\end{proof}

Under suitable assumptions it is possible to define an antipode map $S:\cart_{\Gamma}\rightarrow\cart_{\Gamma}$ so that $\cart_{\Gamma}$ becomes a $\Z_2$-graded Hopf algebra. The Hopf algebra $\h$ has an antipode map that on generators of $\even$ satisfies 
\begin{align}
&S(f_{ij})f_{jk}=f_{ij}S(f_{jk})=\de_{ik}, \nn \\
&S(X_{i})+X_{j}S(f_{ji})=0 \, .
\label{idS}
\end{align}

When the antipode map $S:\h\to\h$ restricts to an antipode map in $\even$, that is $S(f_{ij}) \in \even$ for all $i,j \in \{1,\dots ,N\}$, we prove we can extend it to  a consistent $\Z_{2}$-graded antipode map in $\cart_{\Gamma}$.
 In the following we denote as '\textit{$S$-compatible}' a bicovariant first order differential calculus $(\Gamma,\dd)$ which has such a property. (It is not clear to us whether this assumption is satisfied by any bicovariant calculus or it is a non trivial request. Later we will consider the example of Woronowicz's $4D_+$ calculus on $SU_q(2)$ and show that such condition does hold in that case.) Before proving this result,  we list here some identities which will be later used.  These are obtained
after multiplying \eqref{quat}, \eqref{relcart} by the algebra elements $S(f_{ij})$ for suitable choice of indices, and using \eqref{idS}.

\begin{align}
\label{ids}
&\sig{i}{j}{r}{s} \sff{s}{n} \sff{r}{k} - \sff{j}{r} \sff{i}{s} \sig{s}{r}{k}{n}  = 0, \\
\label{ids2}
&\sff{p}{l}\xi_j - \sig{p}{n}{j}{k} \xi_n\sff{k}{l}  = 0, \\
\label{ids3}
&\sff{p}{q} e_i - \sigma_{pm}^{ij}e_m \sff{j}{q}  = 0, \\
\label{st}
&t_{ij}^{mn} \sff{p}{j}\sff{q}{i} - t_{qp}^{lk} \sff{k}{n} \sff{l}{m} =0,\\
\label{ids4}
&C_{mn}^i \sff{n}{k}\sff{m}{j} - e_r\sff{r}{k}\sff{i}{j} + \sig{p}{q}{j}{k}\sff{i}{q}e_r\sff{r}{p} + C_{jk}^l S(f_{il}) = 0.
\end{align}
\\

\begin{prop}
\label{thms}
Let $\cart_{\Gamma}$ be the quantum Cartan algebra associated to an $S$-compatible calculus $(\Gamma,\dd)$. The linear map $S_{\Gamma}:\cart_{\Gamma}\rightarrow\cart_{\Gamma}$ defined on generators in terms of the antipode in $\h$ by
\begin{equation}
\label{antip}
\begin{array}{lll}
S_{\Gamma}(f_{ij}) = S(f_{ij}), &\qquad & S_{\Gamma}(\xi_i) = -  \xi_j S(f_{ji}), \\
S_{\Gamma}(X_i)    = - \, X_j S(f_{ji}), &  \qquad & S_{\Gamma}(\de) = -  \de
\end{array}
\end{equation}
and extended as a $\Z_2$-graded algebra  and coalgebra anti-homomorphism defines an invertible antipode on the bialgebra $\cart_{\Gamma}$, so that $\cart_{\Gamma}$ becomes a $\Z_2$-graded Hopf algebra.
\end{prop}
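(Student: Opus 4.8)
The strategy is to verify that the linear map $S_\Gamma$ of \eqref{antip}, extended as a $\Z_2$-graded algebra \emph{and} coalgebra anti-homomorphism, is well defined (compatible with the defining relations \eqref{quat} and \eqref{relcart}) and satisfies the antipode axiom $m\circ(S_\Gamma\otimes\mathrm{id})\circ\co = \eta\circ\varepsilon = m\circ(\mathrm{id}\otimes S_\Gamma)\circ\co$; invertibility will then follow by exhibiting $S_\Gamma^{-1}$ explicitly as the analogous anti-homomorphism built from $S^{-1}$ in $\h$. First I would observe that on the even subalgebra $\even$ the map $S_\Gamma$ agrees with the restriction of the antipode of $\h$, which exists precisely because of the $S$-compatibility hypothesis $S(f_{ij})\in\even$; so the relations \eqref{quat} are automatically respected and the Hopf-algebra axioms already hold on $\even$ by \eqref{idS}. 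The content of the proposition is therefore entirely about the odd generators $\xi_i,\de$ and their interaction with the $f_{ij},X_i$.

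\textbf{Well-definedness on the odd relations.} Here the key point is that $S_\Gamma$, being a \emph{graded algebra anti-homomorphism}, sends a relation $r=0$ to $S_\Gamma(r)=0$ only if $S_\Gamma(r)$ is itself a consequence of the relations; the computations are organised exactly so that this holds. Concretely, applying $S_\Gamma$ to the generators in each of the six relations of \eqref{relcart} and using the graded anti-multiplicativity, one lands on expressions that, after multiplying through by suitable $S(f_{ij})$ and using the matrix identities $S(f_{ij})f_{jk}=\delta_{ik}$ and $S(X_i)=-X_jS(f_{ji})$ from \eqref{idS}, reduce to the already-listed identities \eqref{ids}--\eqref{ids4}. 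For instance, the relation $\xi_i f_{kj}-\sigma^{ij}_{nm}f_{kn}\xi_m=0$ maps under $S_\Gamma$ to something proportional to \eqref{ids2}; the braided Serre relation $\xi_i\xi_j+t^{ij}_{kl}\xi_k\xi_l=0$ maps to \eqref{st} (using \eqref{relt} to move the $f$'s across); the Cartan identity $\xi_i\de+\de\xi_i=X_i$ and $\de^2=0$, $[f_{ij},\de]=0$, $[X_i,\de]=0$ are checked directly since $S_\Gamma(\de)=-\de$. I would present this as a finite list of reductions, citing \eqref{ids}--\eqref{ids4}, \eqref{ff}, \eqref{relt} and \eqref{idS}, rather than writing every line.

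\textbf{The antipode axiom.} Because $\co$ has the ``matrix-comodule'' shape $\co(\xi_i)=1\otimes\xi_i+\xi_j\otimes f_{ji}$ (and likewise for $X_i$), the convolution identity $\xi_jS(f_{ji})\cdot 1 + (\text{correction}) = 0$ is exactly the second line of \eqref{idS} with $X$ replaced by $\xi$; here one must be careful with the $\Z_2$-sign in the multiplication of $\cart_\Gamma\otimes\cart_\Gamma$, namely $(a_1\otimes a_2)(b_1\otimes b_2)=(-1)^{|a_2||b_1|}a_1b_1\otimes a_2b_2$, and I expect this sign bookkeeping—together with checking that $S_\Gamma$ is genuinely a \emph{coalgebra} anti-homomorphism, i.e. $\co\circ S_\Gamma=(S_\Gamma\otimes S_\Gamma)\circ\tau\circ\co$ with $\tau$ the graded flip—to be the main obstacle, since the group-like-plus-primitive mix of $\co(\de)$ and the comodule-type $\co(\xi_i)$ interact nontrivially. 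For $\de$ the axiom reads $S_\Gamma(\de)+\de=0$, true by definition, and on $\even$ it holds by \eqref{idS}; so what remains is the $\xi_i$ computation, which closes using $\xi_j S(f_{ji})$ being the chosen value of $S_\Gamma(\xi_i)$ and the relation $X_jS(f_{ji})=-S(X_i)$ extended formally to $\xi$. Finally, invertibility: I would define $\bar S_\Gamma$ on generators by $\bar S_\Gamma(f_{ij})=S^{-1}(f_{ij})$ (which lies in $\even$ since $S^{-1}$ is likewise a morphism of $\even$ under the $S$-compatibility assumption, as one sees by applying $S^{-1}$ to $S(f_{ij})\in\even$ after noting $S^{-2}$ preserves $\even$—or more directly because $f_{ij}\circ S^{-2}$ appears already in \eqref{fun_f_r}), $\bar S_\Gamma(\xi_i)=-S^{-1}(f_{ji})\xi_j$, $\bar S_\Gamma(X_i)=-S^{-1}(f_{ji})X_j$, $\bar S_\Gamma(\de)=-\de$, extended as a graded algebra/coalgebra anti-homomorphism, and check $\bar S_\Gamma\circ S_\Gamma=\mathrm{id}=S_\Gamma\circ\bar S_\Gamma$ on generators—an immediate consequence of $S\circ S^{-1}=\mathrm{id}$ on $\h$ together with the relation $\xi_if_{kj}=\sigma^{ij}_{nm}f_{kn}\xi_m$ used to reorder the product $S^{-1}(f_{kj})\xi_kS(f_{ji})$ type terms. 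This makes $\cart_\Gamma$ a $\Z_2$-graded Hopf algebra with invertible antipode.
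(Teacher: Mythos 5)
Your proposal follows essentially the same route as the paper's proof: handle the even part via $S$-compatibility and the Hopf structure of $\h$, verify the antipode axiom and the coalgebra anti-homomorphism property on generators, check compatibility with the odd relations of \eqref{relcart} by reducing to the auxiliary identities \eqref{ids}--\eqref{ids4} together with \eqref{idS} and \eqref{t}, and exhibit the inverse antipode explicitly as in \eqref{antip-inv}. Although you leave the reductions as a plan rather than carrying them out line by line, the ingredients and organisation coincide with the paper's argument, so the approach is correct.
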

For simplicity of notation in the following we will denote $S_{\Gamma}$ with  $S$.

\begin{proof} We start by showing that  the identity $\cdot (S\otimes id)\co (x) = \varepsilon (x) = \cdot (id\otimes S)\co(x)$ holds on any element $x\in\,\cart_{\Gamma}$ (where $\cdot$ denotes the algebra multiplication). On even generators $\{X_{i},f_{jk}\}\subset\even$, this is straightforward since it is valid in $\h$.  Using \eqref{idS} we compute
\begin{align*}
\cdot (S\otimes id)\co (\xi_i) &= \xi_i + S(\xi_j)f_{ji}
= \xi_i - \xi_k S(f_{kj})f_{ji} = 0 = \varepsilon (\xi_i).
\end{align*}
The analogous relation for $\de$ is trivial.

 Next we need to show that $S$ is a $\mathbb{Z}_2$-coalgebra anti-homomorpism on generators, i.e. that $\co\circ S = \tau\circ(S\otimes S)\circ \co$,  where $\tau$ is the flip map. On even generators it follows again from the property  of $S$ on $\h$; on $\de$ it is a direct application of the definition of $S$. We show the explicit computations on $\xi_a$:
\begin{equation*}
\begin{split}
\co\circ S(\xi_i) & = - \, \co (\xi_j S(f_{ji})) \\
& =- \, (1\otimes \xi_j + \xi_h\otimes f_{hj})(S(f_{ki})\otimes S(f_{jk})) \\
 & = - \, S(f_{ki})\otimes\xi_j S(f_{jk}) - \xi_h S(f_{ki})\otimes f_{hj}S(f_{jk}) \\
 &= S(f_{ki})\otimes s(\xi_k) + S(\xi_i)\otimes 1 \\
 & = \tau\circ(S\otimes S)(1\otimes\xi_i + \xi_k\otimes f_{ki}) = \tau\circ(S\otimes S)\circ\co(\xi_i) \, .
\end{split}
\end{equation*}

The further step is to prove that the map $S$ defined on the generators of $\cart_{\Gamma}$ is compatible with the algebraic relations \eqref{quat} and \eqref{relcart}. It is clear that relations \eqref{quat} involving only even generators  are compatible with $S$, since this compatibility follows from the Hopf algebra structure of $\h$.
We then focus our attention on the relations involving the odd degree generators.
Following the notation introduced in (\ref{ABD}), we begin by computing:
\begin{equation*}
\begin{split}
S(\mathfrak{a}_{ijk}) & = - \sff{j}{k}\xi_l\sff{l}{i} + \sig{n}{m}{i}{k} \xi_l\sff{l}{m}\sff{j}{n} \\
 & = - \, \sigma_{jr}^{ls}\xi_r\sff{s}{k}\sff{l}{i} + \sig{n}{m}{i}{k} \xi_l\sff{l}{m}\sff{j}{n} \\
 & = - \, \sigma_{sl}^{ik} \xi_r\sff{r}{l}\sff{j}{s} + \sig{n}{m}{i}{k} \xi_l\sff{l}{m}\sff{j}{n} = 0
\end{split}
\end{equation*}
where we used (\ref{ids2}) and (\ref{ids}) in the first and second line. Next we compute
\begin{equation*}
\begin{split}
S(\mathfrak{b}_{ij}) & = X_h\sff{h}{j}\xi_j\sff{j}{i} - \sig{l}{h}{i}{j}\xi_m\sff{m}{h}X_n\sff{n}{l} + C_{ij}^k \xi_t\sff{t}{k} \\
 & = \sig{h}{p}{l}{k} X_h\xi_p\sff{k}{j}\sff{l}{i} - \sig{h}{l}{i}{j}\xi_m\sff{m}{l}X_n\sff{n}{h} + C_{ij}^k \xi_t\sff{t}{k} \\
 & = \xi_h X_k\sff{k}{j}\sff{h}{i} - C_{hk}^t\xi_t\sff{k}{j}\sff{h}{i} - \sig{h}{l}{i}{j}\xi_m\sff{m}{l}X_n\sff{n}{h} + C_{ij}^k \xi_t\sff{t}{k} \\
 & = \xi_h \left( X_k\sff{k}{j}\sff{h}{i} - C_{tk}^h\sff{k}{j}\sff{t}{i} - \sig{l}{m}{i}{j}\sff{h}{m}X_n\sff{n}{l} + C_{ij}^k \sff{h}{k} \right) = 0
\end{split}
\end{equation*}
where this time we used (\ref{ids2}) in the first line, the relation between $\xi_j$ and $X_k$ from (\ref{relcart}) in the second line and finally (\ref{ids3}) in the last line. We go on with
\begin{equation*}
\begin{split}
S(\mathfrak{d}_{ij}) & = \xi_h\sff{h}{j}\xi_l\sff{l}{i} + t_{hl}^{ij} \xi_m\sff{m}{l}\xi_n\sff{n}{h} \\
 & = \sig{h}{r}{l}{s} \xi_h \xi_r\sff{s}{j}\sff{l}{i} + \sig{m}{p}{n}{t}t_{hl}^{ij}\xi_m\xi_p\sff{t}{l}\sff{n}{h} \\
 & = \sig{h}{r}{l}{s} \xi_h \xi_r\sff{s}{j}\sff{l}{i} + \sig{m}{p}{n}{t}t_{nt}^{hl}\xi_m\xi_p\sff{l}{j}\sff{h}{i} \\
 & = \xi_m\xi_p\sff{l}{j}\sff{h}{i} (\sig{m}{p}{h}{l} + t_{nt}^{hl}\sig{m}{p}{n}{t} ) \\
 & = \xi_m\xi_p\sff{l}{j}\sff{h}{i} (t_{mp}^{hl} + \delta_{hm}\delta_{lp}) \\
 & = - \, \xi_h\xi_l \sff{l}{j}\sff{h}{i} + \xi_h\xi_l \sff{l}{j}\sff{h}{i} = 0
\end{split}
\end{equation*}
where in this case we used (\ref{ids2}) on both terms of the first line, then (\ref{st}) on the second line, (\ref{t}) on the third line and the relation between $\xi_m$ and $\xi_p$ from (\ref{relcart}) in the fifth line. It is also
\begin{equation*}
\begin{split}
S(\mathfrak{e}_i) & = - \, \de \xi_j \sff{j}{i} - \xi_j\sff{j}{i} + X_j\sff{b}{i} \\
       & = - \, (\de\xi_j + \xi_j \de)\sff{j}{i} + X_j\sff{j}{i} = \mathfrak{e}_{j}S(f_{ji})
\end{split}
\end{equation*}
hence for $\mathfrak{e}_{a}=0$ it is $S(\mathfrak{e}_{a})=0$ as well. Along the same lines, it is easy to check that:
\begin{align}
&S(\mathfrak{t})=\mathfrak{t}, \nn \\
&S(\mathfrak{m}_{ij})=\de S(f_{ij})-S(f_{ij})\de=0, \nn \\
&S(\mathfrak{z}_{i})=-\de X_{j}S(f_{ji})+X_{j}S(f_{ji})\de=0.
\label{spu}
\end{align}
The last two identities in the above expressions are valid since the differential calculus $(\Gamma, \dd)$ is $S$-compatible, so that both $S(f_{ij})$ and $X_{j}S(f_{ji})$ are elements in the algebra $\even$, whose elements in $\cart_{\Gamma}$ commute with the element $\de$. It is then proved that the map $S$ introduced on the generators of $\cart_{\Gamma}$ by \eqref{antip} consistently define an antipode.

The invertibility of the antipode on even generators follows from the invertibility of $S$ in $\h$ and the hypothesis of $S$-compatibility of the calculus. Since the antipode has the same expression on the $X_i$ and on the $\xi_i$, the invertibility for odd generators also follows. Explicitly in terms of the elements   $S^{-1}(f_{ij}) \in \cart_\Gamma$ we can show that
\begin{equation}\label{antip-inv}
S^{-1}(X_i) = -  S^{-1}(f_{ji}) \, X_j  \; , \qquad S^{-1}(\xi_i) = -  S^{-1}(f_{ji}) \, \xi_j  \, .
\end{equation}
For instance on the $X_i$ one has
\begin{eqnarray*}
S^{-1}(S(X_i))&= & - S^{-1}(X_j S(f_{ji})) = -f_{ji} S^{-1}(X_j) = f_{ji} S^{-1}(f_{kj})X_k
= S^{-1}(S(f_{ji})) S^{-1}(f_{kj})X_k \\ &=& S^{-1}(f_{kj}S(f_{ji}) )X_k = \delta_{ki}~ X_k= X_i
\end{eqnarray*}
and also
$ S(S^{-1}(X_i)) = - S(X_j)f_{ji} = X_k S(f_{kj})f_{ji} = X_{k} ~\delta_{ki}= X_i $, where we used $S(f_{ik})f_{kj}=f_{ik} S(f_{kj})= \varepsilon(f_{ij})= \delta_{ij}$ and that $S^{-1}$ has to be an algebra anti-homomorphism.
\end{proof}

We conclude this section recalling that the classical differential calculus on a Lie group $G$ can be described as a bicovariant calculus \`a la Woronowicz, with $A=\mathcal{F}(G)$ and $R_\Gamma =(\ker\,\varepsilon)^{2}$. The associated tangent space reduces to $\mathfrak{g}$, with $\sigma^{ij}_{kl}= \delta_{ik}\delta_{jl}$, $f_{ij}= \delta_{ij}$ and $t^{ij}_{kl}= \delta_{ik}\delta_{jl}$, so that the `quantum' Cartan algebra from Definition \ref{defC} reduces to  the universal enveloping algebra of $\gt$.

\section{Quantum Cartan calculus}
\label{s:qcc}

As sketched in Section \ref{s:cla}, the classical Cartan algebra $\gt$ associated to a Lie group $G$ has two representations in terms of graded differential operators acting on $\Omega(G)$  associated to left and right invariant vector fields on $G$. This section describes analogous results for the quantum Cartan algebra $\cart_{\Gamma}$ of an $S$-compatible bicovariant differential calculus. The differential operators related to left invariant one forms were already defined (see e.g. \cite{sch4, ac, ks}); what we do is to exhibit their commutation relations by showing that they provide a left representation of the Hopf algebra $\cart_\Gamma$. We then define a right version of Lie and inner derivatives `dual' to the set of right invariant one forms and conclude they realise a right action of $\cart_{\Gamma}$.
\\

Given a bicovariant differential calculus $(\Gamma, \dd)$, the exterior algebra $\Gamma^{\wedge}$   with left and right compatible $\A$-coactions  given by \eqref{lrcoe} is a bicovariant bimodule. Dually it has a natural $\h$-bimodule structure:
\begin{align}
&h\la\alpha:=(id\otimes h)\circ\gd(\alpha)=\alpha_{(0)}\hs{h}{\alpha_{(1)}}, \nn \\
&\alpha\ra h:=(h\otimes id)\circ\dg(\alpha)=\hs{h}{\alpha_{(-1)}}\alpha_{(0)},
\label{deflie}
\end{align}
for any $h\in\,\h$ and $\alpha\in\,\Gamma^{\wedge}$.

\subsection{Left quantum Cartan calculus}
\label{ss:le}

Given the elements $X_{i}, ~ f_{ij}$ in $\even\subset\h$ we denote by $\liel{i}, \liel{ij}$ the following $0$-order operators on $\Gamma^\wedge$
\begin{equation}
\liel i(\alpha):=X_{i}\la \alpha \;, \quad
\liel {ij}(\alpha):=f_{ij}\la\alpha \; .
\label{dLL}
\end{equation}
The operator $\liel{i}:\Gamma^{\wedge}\to\Gamma^{\wedge}$ is usually referred to as the left Lie derivative associated to $X_{i}$. From the counit and coproduct maps in
\eqref{neco} in $\even$ the above operators act as derivations with $\liel{i}(1)=\liel{ij}(1)=0$ and the Leibniz rule
\begin{align}
&\liel{i} (\a\wedge\b) = \a\wedge \liel{i}(\b) + \liel{j}(\a)\wedge \liel{ij}(\b), \label{lielcopr}\\
&\liel{ij}(\a\wedge\b) = \liel{ik}(\a)\wedge \liel{kj}(\b) \label{fcopr}
\end{align}
on any $\alpha, \beta\in\,\Gamma^{\wedge}$. From \eqref{deflie} it is $\liel{i}(\alpha)= 0$ and $\liel{ij}(\alpha)=\delta_{ij} $ if $\gd(\alpha)= \alpha \otimes 1$; in addition
$\liel{i}( \o_k) = C_{ji}^{k}\o_j$ and $\liel{ij}(\o_k)= \sigma_{ik}^{lj}\o_l$. The relations \eqref{quat} can now be written as:
\begin{equation}
\begin{array}{ll}
\liel{i}\liel{j} -\sigma^{ij}_{kl}\liel{k}\liel{l} = C_{ij}^k\liel{k}, \qquad
&\sigma_{ij}^{rs} \liel{rn} \liel{sk} = \liel{ir} \liel{js} \sigma_{rs}^{nk} ,  \quad \\
C^{i}_{mn}\liel{mj}\liel{nk}+\liel{ij}\liel{k}=\sigma^{jk}_{pq}\liel{p}\liel{iq}+C^{l}_{jk}\liel{il}, \quad
&\liel{i} \liel{kj} = \sigma^{ij}_{nm} \liel{kn}\liel{m}\label{qual}.
\end{array}
\end{equation}

\noindent The differential $\dd:\ext\rightarrow\Gamma^{\wedge +1}$ is a degree $1$ derivation which commutes with the $\h$-bimodule structure maps,  so that
 \beq
 \liel{i}\dd - \dd\liel{i} = 0, \qquad\liel{ij}\dd - \dd\liel{ij} = 0.
\label{das}
\eeq
It is also possible to introduce degree $(-1)$ derivations on $\Gamma^{\wedge}$, which generalize the concept of inner product of a differential form by a vector field. Given the basis elements $\omega_{i}\in\,\linv\subset\Gamma$ the inner derivative associated to the dual basis element $X_{i}\in\T$  is defined as
\begin{equation}
\intl{k}(\o_{j}):=\hp{\o_{j}}{X_{k}}=\delta_{kj}
\label{icopr}
\end{equation}
and then extended to $\Gamma^\wedge$ by imposing the following `Leibniz rule'
\begin{equation}
\intl{k}(\a\wedge\b):= (-1)^{|\a|}\a\wedge \intl{k}(\b) + \intl{j}(\a)\wedge \liel{jk}(\b) \; , \quad \forall \alpha, \beta \in \Gamma^\wedge \,.
\end{equation}

\begin{rem}
The above relations have been proved \cite{ks} to consistently define  inner derivations on the exterior algebra $\Gamma^{\wedge}_{\scriptscriptstyle{\mathcal{W}}}=\oplus_{n}(\Gamma^{\otimes^{n}}/\ker\, A_{n})$ \`a la Woronowicz. Since $\langle\ker\,(id-\sigma)\rangle\,\subset\ker\,A_{n}$ for any $n=2,\ldots, N$, the definition is consistent also on $\Gamma^\wedge=\oplus_n \Gamma^{\otimes^n}/\langle\ker(id-\sigma)\rangle$, which is the exterior algebra we are considering in this paper.
\end{rem}
The $\Z_{2}$-graded commutation relations among inner derivatives and differential are still given by the Cartan's formula
\begin{equation}
\label{crid}
\intl{k}\dd + \dd\intl{k} = \liel{k} \; .
\end{equation}
The equation above is valid in this framework since Lie and inner derivatives are associated to elements in $\T$, whose counit is trivial.
As noted in \cite{sch3}, the Cartan identity needs to be modified if one is interested in defining $\lie{X}, ~ \intl{X}$ associated  to a generic element $X \in \h$. The condition that $\intl{X}$ vanishes on $0$-forms requires indeed in that case to add an extra terms $\varepsilon(X)$ to the Cartan identity.
\\

 We now prove that the operator $\lcartop$ indeed provides a representation of $\cart_{\Gamma}$ on $\ext$.

\begin{thm}
\label{leftcart}
Let $\cart_{\Gamma}$ be the quantum Cartan  algebra associated to an $S$-compatible calculus $(\Gamma,\dd)$. Then the map $\lambda:\cart_{\Gamma}\to \mathrm{End}(\Gamma^{\wedge})$, $x \mapsto \lambda_x$, defined on generators of $\cart_{\Gamma}$ by
\begin{equation}
\label{cartrep}
\lambda_{X_j}= \liel{j} \, , \qquad \lambda_{f_{jk}}=\liel{jk} \, , \qquad  \lambda_{\xi_j}=\intl{j}, \qquad \lambda_{\de}= \dd
\end{equation}
and extended as a $\Z_{2}$-graded algebra homomorphism to the whole of  $\cart_{\Gamma}$
defines a left $\cart_\Gamma$-Hopf module algebra structure on $\ext$.
\end{thm}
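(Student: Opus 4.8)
The plan is to establish the two components of a left $\cart_{\Gamma}$-module algebra structure on $\ext$ separately: first, that the assignment \eqref{cartrep} respects every relation \eqref{quat}, \eqref{relcart} of $\cart_{\Gamma}$, so that $\lambda$ is a well-defined $\Z_{2}$-graded algebra homomorphism $\cart_{\Gamma}\to\mathrm{End}(\ext)$; and second, that this action is compatible with the wedge product and the unit of $\ext$ through the coproduct \eqref{coprC}.

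Several of the relations in the first task hold for structural reasons and require no computation. The map $h\mapsto(h\la-)$ is a left action of the algebra $\h$ on $\ext$ (dual to the right coaction $\gd$), and $\liel{i}=X_{i}\la-$, $\liel{jk}=f_{jk}\la-$ with $X_{i},f_{jk}$ generating $\even\subset\h$ subject to \eqref{quat}; hence $\liel{i},\liel{jk}$ automatically satisfy \eqref{qual}. The relation $\de\de=0$ is the defining property of the differential on $\ext$; the relations $\liel{i}\dd=\dd\liel{i}$ and $\liel{jk}\dd=\dd\liel{jk}$ follow from \eqref{lrcoe} (setting $x=1$ yields $\gd\circ\dd=(\dd\otimes\mathrm{id})\circ\gd$, so $\dd$ commutes with the dual $\h$-action); and $\xi_{i}\de+\de\xi_{i}=X_{i}$ is the Cartan identity \eqref{crid}. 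What remains are the three operator identities coming from the mixed relations in \eqref{relcart}, namely $\intl{i}\liel{kj}=\sigma^{ij}_{nm}\liel{kn}\intl{m}$, then $\intl{i}\liel{j}-\sigma^{ij}_{kl}\liel{k}\intl{l}=C^{k}_{ij}\intl{k}$, and finally $\intl{i}\intl{j}+t^{ij}_{kl}\intl{k}\intl{l}=0$, of degrees $-1$, $-1$ and $-2$. I would verify each by evaluating the difference of the two sides on a generating set of $\ext$: on a $0$-form $x\in\A$ every term carrying an inner derivative vanishes, so all three are trivially satisfied; on a $1$-form $\omega_{l}\in\linv$, using $\intl{m}(\omega_{l})=\delta_{ml}$, $\liel{kj}(\omega_{l})=\sigma^{mj}_{kl}\omega_{m}$ and $\liel{j}(\omega_{l})=C^{l}_{mj}\omega_{m}$, the first two identities reduce to identities that hold manifestly, while the third, being of degree $-2$, is still trivial there and so must in addition be tested on a product $\omega_{a}\wedge\omega_{b}$, where the Leibniz rule for $\intl{}$ turns it into exactly the quadratic relation \eqref{t}. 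To propagate from the generating set to all of $\ext$ I would show that each difference-operator $D$ obeys a Leibniz rule of the shape dictated by the corresponding coproduct computed in the proof of Proposition \ref{biC}: for the last one, $D_{ij}:=\intl{i}\intl{j}+t^{ij}_{kl}\intl{k}\intl{l}$ satisfies $D_{ij}(\alpha\wedge\beta)=\alpha\wedge D_{ij}(\beta)+D_{pq}(\alpha)\wedge\liel{pi}\liel{qj}(\beta)$, the ``middle term'' cancelling precisely by \eqref{t} once the first identity is in hand and the $f$'s are shuffled through the inner derivatives by \eqref{ff} and \eqref{relt}. Since each $D$ lowers degree it vanishes on $\A$ for free, and as $\ext$ is generated in positive degree by $\linv$ over $\A$, such a Leibniz rule forces $D=0$ identically; this is why the three identities must be treated in the order listed, each step using the previous ones and the numerical constraints \eqref{ff}, \eqref{relt}, \eqref{t}.

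For the second task, on generators the condition $x\la 1=\varepsilon(x)1$ is immediate from $\liel{i}(1)=\intl{i}(1)=\dd(1)=0$ and $\liel{jk}(1)=\delta_{jk}$, and the graded compatibility $x\la(\alpha\wedge\beta)=\sum(-1)^{|x_{(2)}||\alpha|}(x_{(1)}\la\alpha)\wedge(x_{(2)}\la\beta)$ with respect to the coproducts \eqref{coprC} is, generator by generator, precisely the Leibniz rules \eqref{lielcopr}, \eqref{fcopr}, the defining ``Leibniz rule'' for $\intl{}$, and the fact that $\dd$ is a degree-one graded derivation. Both conditions then extend from the generators to all of $\cart_{\Gamma}$ by the usual argument: since $\co$ is an algebra homomorphism (Proposition \ref{biC}), the set of elements of $\cart_{\Gamma}$ satisfying them is a subalgebra, and it contains the generators.

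I expect the main obstacle to be the identity $\intl{i}\intl{j}+t^{ij}_{kl}\intl{k}\intl{l}=0$. Because it is of degree $-2$ it is not pinned down by the action on $\A$ and $\linv$ alone, so the verification must reach form-degree two and, more delicately, must establish the Leibniz rule for the degree-$(-2)$ operator $D_{ij}$ on the quotient $\ext=\bigoplus_{n}\Gamma^{\otimes n}/\langle\ker(\mathrm{id}-\sigma)\rangle$ in a way consistent with the exterior-algebra relations; this is where the full interplay between the braiding $\sigma$, the tensor $t$ and the compatibility identities \eqref{ff}, \eqref{relt} is used. The two degree-$(-1)$ identities, by contrast, are fixed by the action on $0$- and $1$-forms and are routine.
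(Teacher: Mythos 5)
Your proposal is correct and follows essentially the same route as the paper: the even relations, the differential relations and the Cartan identity are handled structurally, the three mixed relations $\mathfrak{a}_{ijk},\mathfrak{b}_{ij},\mathfrak{d}_{ij}$ are checked on $\A$, $\linv$ (and $\linv^{\wedge 2}$ for $\mathfrak{d}$) and then propagated by induction through the Leibniz rules encoded in the coproducts of Proposition \ref{biC}, and the module-algebra conditions \eqref{azione} are verified on generators by matching Leibniz rules with \eqref{coprC}. Your explicit spelling out of the Leibniz rule for the difference operators (e.g. for $D_{ij}$) only makes precise the induction step the paper invokes via \eqref{azione}, so no substantive difference.
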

\begin{proof}

We have to prove that $\ext$ is a $\cart_{\Gamma}$-left module with respect to $\lambda$ and that for any generator $x$ of $\cart_{\Gamma}$
\begin{equation} \label{azione}
\lambda_x(\alpha\wedge\beta)=\lambda_{x_{(1)}}(\alpha) \wedge  \lambda_{x_{(2)}}(\beta)  \; , \quad
\lambda_x(1)=\varepsilon(x)1 \; .
\end{equation}

The statement involving the counit can be easily checked, while the one involving the coproduct follows by comparing Leibniz rules (\ref{lielcopr}), (\ref{fcopr}), (\ref{icopr}) and the one for $\dd$ with relations (\ref{coprC}).

Since the map $\lambda:\cart_{\Gamma}\to\mathrm{End}(\Gamma^{\wedge})$ is defined as a $\Z_{2}$-graded algebra homomorphism, the proof that $\ext$ is a left $\cart_\Gamma$-module reduces to show that the relations \eqref{quat} and \eqref{relcart} among generators are satisfied by their images via  $\lambda$.
The identities \eqref{qual} show that the relations \eqref{quat} involving only even degree generators in $\cart_{\Gamma}$ are satisfied.  With respect to the notation introduced  in \eqref{ABD}, the relation $\lambda_{\mathfrak{e}_{i}}=0$ is equivalent to the Cartan identity \eqref{crid}, while the relations $\lambda_{\mathfrak{m}_{ij}}=\lambda_{\mathfrak{z}_{i}}=0$ are equivalent to \eqref{das} and clearly $\lambda_{\mathfrak{t}}=0$ corresponds to the identity $\dd^{2}=0$.  It is enough to prove  that $\lambda_{\mathfrak{a}_{ijk}}=0$ and  $\lambda_{\mathfrak{b}_{ij}}=0$ on  $\linv$   and that $\quad\lambda_{\mathfrak{d}_{ij}}=0$ on $\linv^{\wedge 2}$,  since on a form of generic higher degree the result follows by induction using \eqref{azione} .
Explicitly one has:
\begin{align}
\lambda_{\mathfrak{a}_{hjk}}(\o_{s})&=(\intl{h}\liel{jk} - \sigma^{hk}_{nm}\liel{jn}\intl{m}) \, \o_s \nn\\
&= \intl{h} \sigma_{js}^{rk} \o_r - \sigma^{hk}_{nm}\liel{jn} \delta_{ms}
= \sigma_{js}^{rk} \delta_{hr} - \sigma^{hk}_{nm}\delta_{jn}\delta_{ms} = 0 ~,
\label{copri}
\end{align}
and also
\begin{align}
\lambda_{\mathfrak{b}_{hj}}(\o_{s})&=(\intl{h}\liel{j}-\sigma^{hj}_{kl}\liel{k}\intl{l}-C^{k}_{hj}\intl{k})\o_{s}=C^{s}_{lj}\intl{h}\o_{l}-C^{k}_{hj}\de_{ks}=0,
\label{coprii}
\end{align}
where we used that $\liel{a}(1)=0$, and finally
\begin{align}
\lambda_{\mathfrak{d}_{kj}}(\o_{s}\wedge\omega_{r})&=
(\intl{k}\intl{j} + t^{kj}_{hl}\, \intl{h}\intl{l}) \, \o_s\wedge\o_r \nn \\
&= \intl{k} (- \, \o_s\delta_{jr} + \delta_{rs} \liel{rj}\o_r ) + t^{kj}_{hl} \, \intl{h} ( -\, \o_s\delta_{lr} + \delta_{ms}\liel{ml}\o_r ) \nn\\
 & = \intl{k} (- \, \o_s\delta_{jr} + \sigma_{sr}^{mj}\o_m) + t^{kj}_{hl} \, \intl{h} ( -\, \o_s\delta_{lr} + \sigma_{sr}^{ml}\o_m) \nn\\
 & = - \delta_{ks}\delta_{jr} + \sigma_{sr}^{mj}\delta_{im} - t^{kj}_{hl}\delta_{hs}\delta_{lr} + t^{kj}_{hl}\sigma_{sr}^{ml}\delta_{hm}\nn \\
 & = - \delta_{ks}\delta_{jr} + \sigma_{sr}^{kj} - t^{kj}_{sr} + t^{kj}_{hl}\sigma_{sr}^{hl} = 0
\label{copriii}\end{align}
where in the last line we used (\ref{t}).
\end{proof}

Given a quantum subgroup $B$ of $A$, the bicovariant differential calculus $(\Gamma,\dd)$ induces  a right $A$-covariant differential calculus $\Gamma ' \subset \Gamma$ on the subalgebra $^{coB}A$ of left coinvariants in $\A$. The set of operators $\{\liel{i},\liel{ij},\intl{j},\dd\}$ restricts to a left representation of $\cart_{\Gamma}$ on the exterior algebra of $\Gamma'$.

\subsection{Right quantum Cartan calculus}
\label{ss:ri}

In this section we show how it is possible to define \textit{right} Lie and inner derivatives on $\ext$, so that together with the differential they provide a right representation of the quantum Cartan algebra $\cart_{\Gamma}$.
The notion of right Lie derivative on $\Gamma$ is based on the right $\h$-module structure of the calculus (\ref{deflie}): a fundamental role is played by the elements $R_i= - \, S^{-1}(X_i)$  in \eqref{ddx}.
\begin{defi} Given an element $X_{i}\in\,\T$, we define
the $0$-order operators:
\begin{align}
&\lier i:\Gamma^{\wedge}\to\Gamma^{\wedge}, \qquad\qquad\lier i(\alpha):=\alpha\ra R_{i}, \label{defrlie} \\
&\lier {ij}:\Gamma^{\wedge}\to\Gamma^{\wedge}, \qquad\qquad\lier {ij}(\alpha):=\alpha\ra S^{-1}(f_{ij}) \label{rlief}
\end{align}
and we refer to $\lier{i}$ as the right Lie derivative associated to  $X_{i}$.
\end{defi}
The right $\h$-module algebra structure of $\ext$ (dual to the left $\A$-comodule algebra structure) determines the following Leibniz rule for these right Lie derivatives:
\begin{equation}
\label{liercopr}
\begin{split}
\lier{i} (\a\wedge\b) & = \lier{i}(\a)\wedge\b + \lier{ji}(\a)\wedge \lier{j}(\b),  \\
\lier{ij}(\a\wedge\b) & = \lier{kj}(\a) \wedge \lier{ik}(\b)
\end{split}
\end{equation}
on any $\alpha, \beta\in\,\Gamma^{\wedge}$, as well as $\lier{a}(1)=\lier{ab}(1)=0$. We explicitly have
\begin{equation}
\label{lieract}
\begin{split}
\lier{i}(\e_k) & = - \, \e_k\ra(S^{-1}(X_i)) = - \, \langle S^{-1}(X_i), S(J_{lk}) \rangle \e_l = - \, C_{li}^k \e_l \\
\lier{ij} (\e_k) & = \e_k \ra (S^{-1}(f_{ij})) = \langle S^{-1}(f_{ij}) , S(J_{lk}) \rangle \e_l = \sigma_{ik}^{lj} \e_l
\end{split}
\end{equation}
while $\lier{i}\o_k = 0$ and $\lier{ij} \o_k = \delta_{ij}\o_k$.
From the properties of the antipode map in $\h$ it is easy to check that the algebraic relations \eqref{quat} defining $\even\subset\h$ give the following identities analogous to \eqref{qual}:
\begin{align}
&\lier{i}\lier{j} -\sigma^{ij}_{kl}\lier{k}\lier{l} = -C_{ij}^k\lier{k}\; , \qquad &\sigma_{ij}^{rs} \lier{rn} \lier{sk} = \lier{ir} \lier{js} \sigma_{rs}^{nk} \; ,\nn   \\
&C^{i}_{mn}\lier{mj}\lier{nk}-\lier{ij}\lier{k}=-\sigma^{jk}_{pq}\lier{p}\lier{iq}+C^{l}_{jk}\lier{il} \; ,
&\lier{i} \lier{kj} = \sigma^{ij}_{nm} \lier{kn}\lier{m}\label{quar}.
\end{align}
Since the differential $\dd:\ext\rightarrow\Gamma^{\wedge +1}$ is a degree $1$ derivation which commutes with the $\h$-bimodule structure one has, in analogy to \eqref{das},
 \beq
 \lier{i}\dd - \dd\lier{i} = 0 \; , \qquad\lier{ij}\dd - \dd\lier{ij} = 0 \; .
\label{dad}
\eeq
We need to define right inner derivatives. The idea is to consider $\intr{j}$ as the degree $(-1)$ derivation which on exact one-forms \eqref{ddx}  'extracts' the component along the right invariant element $\eta_j$, and to extend it by a Leibniz rule compatible with the coproduct of $R_j$.
In complete analogy with inner derivatives dual to left invariant forms, we give the definition on  $\Gamma^{\otimes}$ and then show that it descends to the exterior algebra $\ext$.
\begin{defi}
\label{defintr}
Given a  bicovariant differential calculus $(\Gamma, \dd)$ as before and $j \in \{1,\dots , N\}$, the right inner derivative $\intr{j}$ is defined as the degree (-1) operator $\intr{j}:\Gamma^{\otimes^k}\rightarrow \Gamma^{\otimes^{k-1}}$
satisfying
\begin{enumerate}
\item $\intr{j} (\e_i) := \delta_{ij} \, ,$ \hspace{1cm} $\forall \, i\in \{1,\dots , N\} $
\item $\intr{j} (\a\otimes\b) := \intr{j}(\a)\otimes\b + (-1)^{|\a|} \lier{kj}(\a)\otimes \intr{k}(\b)\, , \qquad\qquad \forall \, \a, \b \in \Gamma^{\otimes}. $
\end{enumerate}
\end{defi}
\begin{lem}
The right inner derivative $\intr{j}$ descends to a well defined operator on the exterior algebra $\ext=\Gamma^{\otimes}/\langle \ker\,(1-\sigma)\rangle$.
\end{lem}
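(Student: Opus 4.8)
The plan is to show that the right inner derivative $\intr{j}$, originally defined on the tensor algebra $\Gamma^{\otimes}$ via Definition \ref{defintr}, annihilates the two-sided ideal $\langle\ker(1-\sigma)\rangle$ that defines the quotient $\ext=\Gamma^{\otimes}/\langle\ker(1-\sigma)\rangle$. Since $\intr{j}$ is a degree $(-1)$ graded derivation with respect to the tensor multiplication, and the ideal is generated (in degree two) by $\ker(1-\sigma)\subset\Gamma^{\otimes 2}$, it suffices to check two things: first, that $\intr{j}$ maps every generator of the ideal to something already in the ideal (equivalently, to zero in the relevant low degrees where the ideal is trivial), and second, that this property propagates to the whole ideal by the Leibniz rule of Definition \ref{defintr}(2). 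The propagation is automatic: if $\alpha\in\langle\ker(1-\sigma)\rangle$ then $\alpha$ is a sum of terms $\beta\otimes\kappa\otimes\gamma$ with $\kappa\in\ker(1-\sigma)$, and applying the graded Leibniz rule distributes $\intr{j}$ across the three factors; the only term not manifestly in the ideal is the one where $\intr{j}$ (or the accompanying $\lier{kj}$) hits the distinguished factor $\kappa$, so the crux is genuinely the degree-two computation.

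Concretely, I would work on the basis of right invariant forms $\eta_i$, since $\intr{j}$ and $\lier{ij}$ have the simplest action there: $\intr{j}(\eta_i)=\delta_{ij}$ and, by \eqref{lieract}, $\lier{ij}(\eta_k)=\sigma_{ik}^{lj}\eta_l$. Recall from the paragraph preceding \eqref{t} (and the discussion of $\sigma^t$) that $\ker(1-\sigma)$ acting on $\eta$-forms is governed by the same tensor $t$: one has that $\ker(1-\sigma)$ is spanned, on the right invariant forms, by combinations of the type $\eta_i\otimes\eta_j + t^{ij}_{kl}\,\eta_k\otimes\eta_l$ (this uses the identity $\sigma(\eta_i\otimes\eta_j)=\sigma^{lk}_{ji}\eta_k\otimes\eta_l$ stated in the excerpt, so that the transpose structure matches the one used to define $t$ via $\ker(1-\sigma^t)$). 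I would then compute, using the graded Leibniz rule of Definition \ref{defintr}(2) with $|\eta_i|=1$,
\begin{align*}
\intr{m}\bigl(\eta_i\otimes\eta_j + t^{ij}_{kl}\,\eta_k\otimes\eta_l\bigr)
&= \delta_{im}\eta_j - \lier{nm}(\eta_i)\,\delta_{nj}
  + t^{ij}_{kl}\bigl(\delta_{km}\eta_l - \lier{nm}(\eta_k)\,\delta_{nl}\bigr)\\
&= \delta_{im}\eta_j - \sigma^{nj}_{im}\eta_n + t^{ij}_{ml}\,\eta_l - t^{ij}_{kl}\,\sigma^{nl}_{km}\eta_n,
\end{align*}
and show that the coefficient of $\eta_n$ vanishes. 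Relabelling indices, the vanishing condition is exactly $\delta_{in}\delta_{jm} - \sigma^{nj}_{im} + t^{ij}_{mn} - t^{ij}_{kl}\sigma^{nl}_{km}=0$, which (after the appropriate index identification, taking into account that the $\sigma$ appearing here is the one in the $\eta\otimes\eta$ relation, i.e. with transposed indices) is precisely relation \eqref{t}. This is the same identity that was used to prove $\lambda_{\mathfrak{d}_{ij}}=0$ in Theorem \ref{leftcart}, so the mechanism is entirely parallel to the left case.

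The main obstacle — really the only subtle point — is bookkeeping of the index positions in $\sigma$: the braiding acts on left invariant forms as $\sigma(\omega_i\otimes\omega_j)=\sigma^{kl}_{ij}\omega_k\otimes\omega_l$ but on right invariant forms as $\sigma(\eta_i\otimes\eta_j)=\sigma^{lk}_{ji}\eta_k\otimes\eta_l$, so one must be careful that the version of \eqref{t} one lands on matches the convention in which $t^{ij}_{kl}$ was defined (via $\ker(1-\sigma^t)$ and the components $(\sigma^t)^{kl}_{ij}=\sigma^{ij}_{kl}$). I would handle this by consistently expressing everything through $\sigma^t$, or equivalently by noting that the transposition on the $\eta$-side precisely cancels the transposition in the definition of $t$, so that \eqref{t} applies verbatim. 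Once the degree-two check is in place, I would remark that for the higher-degree generators $\beta\otimes\kappa\otimes\gamma$ of the ideal the graded Leibniz rule produces three terms, two of which retain the factor $\kappa\in\ker(1-\sigma)$ (hence lie in the ideal) while the third reduces, by the degree-two computation just performed together with the fact that $\lier{kj}$ preserves the ideal (it is induced from a bimodule morphism, consistently with \eqref{relt}), to an element of the ideal as well; therefore $\intr{j}$ descends to $\ext$.
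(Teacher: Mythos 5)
Your overall strategy coincides with the paper's: reduce to the degree-two generators of the ideal via the Leibniz rule of Definition \ref{defintr}, verify the vanishing on right-invariant $2$-tensors, and close the computation with the quadratic identity satisfied by the coefficients describing the kernel. The genuine gap is your identification of those coefficients. You claim that $\ker(1-\sigma)$ is spanned on right-invariant forms by $\e_i\otimes\e_j+t^{ij}_{kl}\,\e_k\otimes\e_l$ with the \emph{same} tensor $t$ defined through $\ker(1-\sigma^{t})\subset\T\otimes\T$, arguing that ``the transposition on the $\e$-side cancels the transposition in the definition of $t$''. It does not: passing from the $\o$-basis to the $\e$-basis only flips index pairs (one checks from $\sigma(\e_i\otimes\e_j)=\sigma^{lk}_{ji}\e_k\otimes\e_l$ that $\sum_{ij} c_{ij}\,\e_i\otimes\e_j\in\ker(1-\sigma)$ iff $\sum_{ij} c_{ji}\,\o_i\otimes\o_j\in\ker(1-\sigma)$), it does not transpose the braiding, and $\ker(1-\sigma)$ and $\ker(1-\sigma^{t})$ are in general distinct subspaces with different coefficient matrices, even though of equal dimension. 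The paper's own $4D_+$ example exhibits the failure: by \eqref{kst} the element $\xz\otimes\xz$ lies in $\ker(1-\sigma^{t})$, so $t^{zz}_{kl}=0$ unless $(k,l)=(z,z)$ and your spanning set would contain $\e_z\otimes\e_z$; but $\sigma(\oz\otimes\oz)\neq\oz\otimes\oz$ by \eqref{sig} (only the combination $\oz\otimes\oz-\lambda(q+q^{-1})\,\op\otimes\om$ belongs to $\sq$), hence $\e_z\otimes\e_z\notin\ker(1-\sigma)$. There is also an index slip in the computation: by \eqref{lieract} one has $\lier{jm}(\e_i)=\sigma^{nm}_{ji}\,\e_n$, not $\sigma^{nj}_{im}\,\e_n$; and the vanishing condition you arrive at is not relation \eqref{t} under any relabelling, because the contraction pattern (which indices of $\sigma$ are contracted with which indices of $t$) is different.

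The repair — and this is what the paper does — is not to use $t$ at all in this lemma. Introduce the coefficients of the kernel directly in the $\e$-basis, writing its generators as $\e_i\otimes\e_j+s^{lk}_{ji}\,\e_k\otimes\e_l$, and apply $(1-\sigma)$ to obtain the identity $\delta_{im}\delta_{jn}+s^{nm}_{ji}-\sigma^{nm}_{ji}-s^{lk}_{ji}\sigma^{nm}_{lk}=0$, the analogue of \eqref{t} for $s$. With the correct evaluation $\lier{rk}(\e_i)=\sigma^{nk}_{ri}\,\e_n$, your degree-two computation then closes exactly by this identity, and the extension to the whole two-sided ideal by induction on the Leibniz rule (using that the $\lier{kj}$ preserve the ideal) is fine as you state it.
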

\begin{proof}
We first prove that $\intr{j}$ is zero on $\ker(1-\sigma)\subset \Gamma\otimes\Gamma$. Define $s_{ji}^{lk}$ as the matrix of coefficients such that
$\e_i\otimes\e_j + s^{lk}_{ji}\e_k\otimes\e_l \in \ker(1-\sigma)$. Similarly to (\ref{t}), applying $(1-\sigma)$ to the previous expression we get the identity
\begin{equation*}
\delta_{im}\delta_{jn} + s_{ji}^{nm} - \sigma_{ji}^{nm} - s_{ji}^{lk}\sigma_{lk}^{nm} = 0,
\end{equation*}
that we use to compute
\begin{equation*}
\begin{split}
\intr{k} \left( \e_i\otimes\e_j + s^{nm}_{ji}\e_m\otimes\e_n \right) & = \intr{k}(\e_i)\otimes\e_j - \lier{rk}(\e_i)\otimes \intr{r}(\e_j)  \\
 & \phantom{=} \, + s^{nm}_{ji} \left( \intr{k}(\e_m)\otimes\e_n - \lier{rk}(\e_m)\otimes \intr{r}(\e_n) \right) \\
 & = \delta_{ki}\e_j - \sigma_{ri}^{nk}\e_n \delta_{rj} + s^{hm}_{ji} (\delta_{km}\e_h - \sigma_{rm}^{nk}\e_n\delta_{rh}) \\
 & = \e_n (\delta_{ki}\delta_{nj} - \sigma_{ji}^{nk} + s^{nk}_{ji} - s^{rc}_{ji}\sigma_{rc}^{nk} ) = 0 \, .
\end{split}
\end{equation*}
The fact that $\intr{j}$ is zero on the ideal in $\Gamma^{\otimes}$ generated by $\ker(1-\sigma)$ now follows by induction from the Leibniz rule.
\end{proof}

The following result is the analogue of Theorem \ref{leftcart}. We present it in terms of a more convenient  set of generators of $\cart_{\Gamma}$:

\begin{thm}
\label{thmcartr}
Let $\cart_{\Gamma}$ be a quantum Cartan calculus associated to an $S$-compatible calculus $(\Gamma,\dd)$. Then the map $\rho:\cart_{\Gamma}\to \mathrm{End}(\Gamma^{\wedge})$, $x \mapsto \rho_x$, defined on generators of $\cart_{\Gamma}$ by
\begin{equation}
\label{derho}
\rho_{\scriptstyle{R_j}} =  \lier{j}, \qquad \rho_{\scriptstyle{S^{-1}}(f_{jk})}=\lier{jk}, \qquad
\rho_{-\scriptstyle{S^{-1}}(\xi_j)} =\intr{j}, \qquad \rho_{\scriptstyle{S^{-1}}(\de)}= \dd
\end{equation}
and extended as a $\Z_{2}$-graded algebra anti-homomorphism to the whole of  $\cart_{\Gamma}$
defines a right $\cart_\Gamma$-Hopf module algebra structure on $\ext$.
\end{thm}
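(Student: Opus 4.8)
The plan is to mirror the strategy of the proof of Theorem \ref{leftcart}, taking advantage of the fact that $\rho$ is declared on a different (but equivalent) set of generators — namely $R_j, S^{-1}(f_{jk}), -S^{-1}(\xi_j), S^{-1}(\de)$ — and that it is an \emph{anti}-homomorphism. First I would verify the two conditions of \eqref{azione} in their right-module, anti-homomorphism form: that $\rho_x(\alpha\wedge\beta)=(-1)^{|x_{(2)}||\alpha|}\rho_{x_{(2)}}(\alpha)\wedge\rho_{x_{(1)}}(\beta)$ (with signs coming from the $\Z_2$-grading and the transposition intrinsic to an anti-homomorphism) and $\rho_x(1)=\varepsilon(x)1$. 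This amounts to comparing the Leibniz rules \eqref{liercopr}, the defining rule in Definition \ref{defintr}(2), and the Leibniz rule for $\dd$ against the coproducts \eqref{coprC} — but applied to the antipode-images of generators, so I would first record that $\co(S^{-1}(f_{ij}))=S^{-1}(f_{jk})\otimes S^{-1}(f_{ki})$, $\co(R_i)=R_i\otimes 1+S^{-1}(f_{ji})\otimes R_j$, $\co(S^{-1}(\xi_i))=S^{-1}(\xi_i)\otimes 1+S^{-1}(f_{ji})\otimes S^{-1}(\xi_j)$, $\co(S^{-1}(\de))=1\otimes S^{-1}(\de)+S^{-1}(\de)\otimes 1$, all of which follow from the coalgebra anti-homomorphism property of $S$ (hence $S^{-1}$) on $\cart_\Gamma$ established in Proposition \ref{thms}. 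These match the right-Leibniz rules with the expected flipped structure.

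Next, since $\rho$ is defined as a $\Z_2$-graded algebra anti-homomorphism, the statement that $\ext$ is a right $\cart_\Gamma$-module reduces to checking that the \emph{opposite} of each defining relation \eqref{quat}, \eqref{relcart} is satisfied by the images. Because the relations \eqref{quat}, \eqref{relcart} are expressed in terms of $X_i,f_{ij},\xi_i,\de$, it is convenient to re-express them in terms of $R_i=-S^{-1}(X_i)$, $S^{-1}(f_{ij})$, $-S^{-1}(\xi_i)$, $S^{-1}(\de)$; applying $S^{-1}$ (an algebra anti-homomorphism) to each relation and using \eqref{antip-inv} and \eqref{idS} turns \eqref{quat} into relations among the $R_i$ and $S^{-1}(f_{ij})$ and \eqref{relcart} into the corresponding relations among the odd generators. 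The images of these are exactly the identities \eqref{quar}, \eqref{dad}, the right Cartan identity, and the two analogues $\lier{i}\lier{kj}$, etc., together with new identities $\intr{h}\lier{jk}-\ldots=0$ and $\intr{h}\lier{j}-\ldots=0$ and $\intr{k}\intr{j}+\ldots=0$ that I would prove by evaluating on $\rinv$ and $\rinv^{\wedge 2}$ using \eqref{lieract} and Definition \ref{defintr}(1), exactly as in \eqref{copri}–\eqref{copriii} but with $\e_i$ in place of $\o_i$ and the $s$-matrix of the Lemma in place of $t$. The inductive step to higher-degree forms again uses the already-verified Leibniz/coproduct compatibility.

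The main obstacle I expect is bookkeeping of the $\Z_2$-signs and the index-transpositions that the anti-homomorphism introduces: one must be careful that, for instance, $\rho_{\xi_i}\rho_{\xi_j}$ (meaning the operator attached to the product $\xi_i\xi_j$ in $\cart_\Gamma$) equals $\rho_{S^{-1}(\xi_j)\text{-image}}\circ\rho_{S^{-1}(\xi_i)\text{-image}}$ with the correct sign, and that the $t$-relation \eqref{t} gets replaced consistently by the $s$-relation from the Lemma with indices in the transposed positions — the compatibility of $s$ with the $f_{ij}$ (the right-invariant analogue of \eqref{relt}, obtainable from the general morphism identity $S_{ij}^{rs}f_{rn}f_{sk}=f_{ir}f_{js}S_{rs}^{nk}$ together with the braiding relation on right-invariant forms $\sigma(\e_i\otimes\e_j)=\sigma^{lk}_{ji}\e_k\otimes\e_l$) is the technical point that makes the $\co(\mathfrak{d})$-type computation go through. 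Once the dictionary between $t$ and $s$ and between $\sigma$ on $\linv$ and on $\rinv$ is set up, each verification is a routine computation parallel to one already carried out in the proof of Theorem \ref{leftcart}. Finally, the Hopf-module-algebra claim follows from combining the module property with \eqref{azione}.
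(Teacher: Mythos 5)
Your overall route is the paper's: record the coproducts of $S^{-1}(X_i)=-R_i$, $S^{-1}(f_{ij})$, $S^{-1}(\xi_i)$, $S^{-1}(\de)$, compare with the Leibniz rules of $\lier{i},\lier{ij},\intr{j},\dd$, then push the defining relations of $\cart_\Gamma$ through $\rho\circ S^{-1}$ and verify the resulting operator identities on right-invariant forms, extending by induction. However, two of your steps are off as written. First, your module-algebra condition $\rho_x(\alpha\wedge\beta)=(-1)^{|x_{(2)}||\alpha|}\rho_{x_{(2)}}(\alpha)\wedge\rho_{x_{(1)}}(\beta)$ has the legs of the coproduct flipped: the anti-homomorphism property of $\rho$ concerns products in $\cart_\Gamma$ (i.e.\ $\rho_{xy}=\pm\rho_y\rho_x$), not the coproduct, and the correct compatibility is the unflipped one, $\rho_x(\alpha\wedge\beta)=\rho_{x_{(1)}}(\alpha)\wedge\rho_{x_{(2)}}(\beta)$ as in \eqref{cond}. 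With your flipped version the very comparison you propose fails already for $R_i$: $\co(R_i)=R_i\otimes 1+S^{-1}(f_{ji})\otimes R_j$ matched against \eqref{liercopr} gives $\lier{i}(\alpha)\wedge\beta+\lier{ji}(\alpha)\wedge\lier{j}(\beta)$ only in the unflipped form, and similarly for $\intr{j}$ against Definition \ref{defintr}(2).

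Second, the odd--odd relation check does not involve the $s$-matrix at all. Since $\rho\circ S^{-1}$ is a (graded) homomorphism, applying it to $\mathfrak{d}_{ij}=\xi_i\xi_j+t^{ij}_{kl}\xi_k\xi_l$ produces exactly $\intr{i}\intr{j}+t^{ij}_{kl}\intr{k}\intr{l}$, i.e.\ the identity to be verified still carries the \emph{same} matrix $t$; evaluating on $\e_r\wedge\e_s$ with $\intr{j}(\e_i)=\delta_{ij}$ and $\lier{ij}(\e_k)=\sigma^{lj}_{ik}\e_l$ closes using \eqref{t} itself, with no need for a right-invariant analogue of \eqref{relt}. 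The matrix $s$ (and the lemma's identity for it) is needed only to show that $\intr{j}$ descends to $\ext=\Gamma^{\otimes}/\langle\ker(1-\sigma)\rangle$, which is established before the theorem; invoking it ``in place of $t$'' would have you verifying a relation that is not the one holding in $\cart_\Gamma$. Likewise, no ``$\co(\mathfrak{d})$-type'' computation is needed here — that was part of Proposition \ref{biC}; what remains in the theorem, besides the points above, is the Cartan identity $\dd\intr{j}+\intr{j}\dd=\lier{j}$, which is best checked separately on elements $y\in A$ and on exact one-forms $\dd y=\e_k(y\ra R_k)$.
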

\begin{proof}
As in the Theorem \ref{leftcart}, we need to show that $\ext$ is a right $\cart_{\Gamma}$-module with respect to $\rho$ and that for any generator
$x$ of $\cart_\Gamma$
\begin{equation}\label{cond}
\rho_x(\alpha\wedge\beta)=\rho_{x_{(1)}}(\alpha) \wedge \rho_{x_{(2)}}(\beta) \; , \quad  \rho_x(1)= \varepsilon(x) \, .
\end{equation}
The identity $\rho_x(1)= \varepsilon(x)$  can again  be easily checked. By a straightforward direct calculation one has that in $\cart_{\Gamma}$
\begin{align}
&\co(S^{-1}(X_{i}))=S^{-1}(X_{i})\otimes 1+S^{-1}(f_{ji})\otimes S^{-1}(X_{j}), \nn \\
&\co(S^{-1}(f_{ij}))=S^{-1}(f_{jk})\otimes S^{-1}(f_{ki}), \nn \\
&\co(S^{-1}(\xi_{i}))=S^{-1}(\xi_{i})\otimes 1+S^{-1}(f_{ji})\otimes S^{-1}(\xi_{j}), \nn\\
&\co(S^{-1}(\de))=S^{-1}(\de)\otimes 1+1\otimes S^{-1}(\de), \label{coSx}
\end{align}
so that the other identity in \eqref{cond} comes from a comparison of these expressions above with the Leibniz rules \eqref{liercopr}, that for the differential and that for the right inner derivative given by $\intr{j} (\a\wedge\b) = \intr{j}(\a)\wedge\b + (-1)^{|\a|} \lier{kj}(\a)\wedge \intr{k}(\b)$ as from Definition \ref{defintr}.

In order to prove that $\ext$ is a right $\cart_{\Gamma}$-module
it is sufficient to show that the $\Z_{2}$-graded anti-homomorphism $\rho$ transforms the relations \eqref{relcart}  involving odd degree generators of $\cart_{\Gamma}$ into identities satisfied by the corresponding right operators (and $\dd$) acting on $\Gamma^{\wedge}$.  Moreover, it is enough to prove these results on the generators of $\Gamma_{inv}$ (or $\Gamma_{inv}\wedge\Gamma_{inv}$ if trivial on $\Gamma_{inv}$) since  by induction we can then conclude for the generic element in $\ext$ as well. From \eqref{ABD} we have

\begin{align}
\rho_{\scriptstyle{S^{-1}}(\mathfrak{a}_{hkj})}(\eta_{r})=(\intr{h}\lier{kj} - \sigma^{hj}_{nm}\lier{kn}\intr{m})\e_r
= \intr{h}\sigma_{kr}^{lj}\e_l - \sigma^{hj}_{nm}\lier{kn}\delta_{mr} = \sigma_{kr}^{hj} - \sigma^{hj}_{nr}\delta_{kn} = 0 \, , \label{non1}
\end{align}
and also the following
\begin{align}
\rho_{\scriptstyle{S^{-1}}(\mathfrak{b}_{ij})}(\eta_{r})&=-(\intr{i}\lier{j} - \sigma^{ij}_{kl}\lier{k}\intr{l} + C_{ij}^t \intr{t}) \e_r \nn\\
&=  \intr{i}C_{lj}^r\e_l + \sigma^{ij}_{kl}\lier{k}\delta_{lr} - C_{ij}^t\delta_{tr} \nn \\
 & =  C_{lj}^r\delta_{il} - C_{ij}^r = 0. \label{non2}
\end{align}
On $\Gamma_{inv}\wedge\Gamma_{inv}$, using \eqref{t}, we compute
\begin{align}
\rho_{\scriptstyle{S^{-1}}(\mathfrak{d}_{ij})}(\eta_{r}\wedge\eta_{s})&=(\intr{i}\intr{j} + t^{ij}_{mn} \, \intr{m}\intr{n})\e_r\wedge\e_s \nn \\ & = \intr{i}(\delta_{jr}\e_s - \lier{hj}\e_r\delta_{hs}) + t^{ij}_{mn} \intr{m}(\delta_{nr}\e_s - \lier{hn}\e_r\delta_{hs}) \nn \\
 & = \intr{i} (\delta_{jr}\e_s - \sigma_{sr}^{mj}\e_m) + t^{ij}_{mn}\intr{m} (\delta_{nr}\e_s - \sigma_{sr}^{md}\e_m) \nn \\
 & = \delta_{jr}\delta_{is} - \sigma_{sr}^{ij} + t^{ij}_{sr} - t^{ij}_{mn}\sigma_{sr}^{mn} = 0\, . \label{non3}
\end{align}
The relation involving the term $\mathfrak{e}_{i}$ gives the Cartan identity, which we prove separately on elements $y\in\A$ and exact 1-forms $\dd y\in\,\Gamma^{\wedge}$: 
\begin{equation*}
\begin{split}
\rho_{\scriptstyle{S^{-1}}(\mathfrak{e}_{j})} (y)&=(\dd\intr{j} + \intr{j} \dd - \lier{j}) y   = \intr{j} \dd y - \lier{j} y = \intr{j} \e_k (y \ra R_k) - y\ra R_j = 0 \, ; \\
\rho_{\scriptstyle{S^{-1}}(\mathfrak{e}_{j})} (\dd y)&=(\dd\intr{j} + \intr{j} \dd - \lier{j}) \dd y  = \dd\intr{j} (\e_k (y\ra R_k)) - \dd\lier{j}y = \dd(y\ra R_j) - \dd(y\ra R_j) = 0.
\end{split}
\end{equation*}
We are left with three relations: $\rho(\mathfrak{m}_{ij})=\rho(\mathfrak{z}_{i})=0$ is equivalent to \eqref{dad} and again $\rho(\mathfrak{t})=0$ corresponds to the identity $\dd^{2}=0$.
\end{proof}

With $B$ a quantum subgroup of $A$, the set of operators $\{\lier{i},\lier{ij},\intr{j},\dd\}$
restricts to a right representation of $\cart_{\Gamma}$ on the exterior algebra of the left-covariant differential calculus induced by $\Gamma$  on the subalgebra  of right coinvariants $A^{coB}\subset\A$.

Following a different perspective, inner and Lie derivatives are introduced in \cite{as,paolo} not only for elements in the quantum tangent space $\T$, but for any vector field $V\in\,\Xi$, with $\Xi$ an $A$-bimodule introduced from  $\T$. It is possible to see that there is no choice of an element $V\in\Xi$ such that the action of the  inner derivative $i_{V}$ associated to $V$ coincides with the action of an $\intr{j}$ 
(Definition \ref{defintr}) on the whole exterior algebra $\Gamma^{\wedge}$. In particular one can prove that even if it is possible to find a vector field $V_{j}\in\, \Xi$ such that $i_{V_{j}}(\eta_{k})=\delta_{jk}$, then the action of the  inner derivative $i_{V_j}$  does not agree with the action of $\intr{j}$ on higher order forms since the Leibniz rules they  satisfy cannot coincide.

\begin{rem} Theorems  \ref{leftcart} and  \ref{thmcartr} show that $\ext$ is a left and right $\cart_{\Gamma}$-Hopf module algebra. By the general theory on Hopf algebras we know that every left action can be turned into a right one by the antipode: on every $v\in V$, $V$ left $\h$-module algebra, we can define a right $\h$ action by $v\ra h:=S(h)\la v$. The right $\cart_{\Gamma}$-module structure of $\ext$ is not of this type; for example $\e_i\ra R_k \neq S(R_k)\la\e_i$, with the rhs always zero by right-coinvariance of $\e_i$.
\end{rem}

\section{An explicit example: the Hopf algebra $\cartsu$}
\label{example}

In this section we present the example of the quantum Cartan algebra $\cartsu$ associated to the $4D_+$ bicovariant differential calculus on $\SUq$ introduced by Woronowicz in \cite{wor2}.
This will show how the knowledge of the braiding matrix $\sigma$ and the calculus $\Gamma$ allows to explicitly derive the algebra and coalgebra structure of $\cart_\Gamma$.  Furthermore we can directly check the '\textit{$S$-compatibility}' of the calculus (see discussion at page \pageref{thms}).

We start by recalling the Hopf algebra structure of $\SUq$. As a $*$-algebra it is generated over $\C$ by two generators $a,c$ together with their $*$-conjugates
$a^*, c^*$ subject to commutation relations

\begin{equation}
\label{suq2}
\begin{array}{lll}
ac = qca  \; , \qquad & ac^*= q c^* a  \; ,\qquad  & cc^*=c^*c \; ,  \\
a a^* + q^2 c c^* =1  \; ,  & a^* a + c^* c=1 \; ,
\end{array}
\end{equation}
with  parameter of deformation $q\in\R\backslash \{0\} $.  For $q=1$ such algebra reduces to the commutative coordinate algebra on the group manifold $SU(2)$.
\noindent
The Hopf algebra co-structures $(\co, \varepsilon, S)$, compatible with the $*$-structure, are
\begin{equation}
\begin{array}{ll}
\co(a)= a \ot a -q c^* \ot c \; , \quad  \quad &
\co(c)= c \ot a + a^* \ot c \; , \quad
\\
\varepsilon(a)=1 \;, \quad  &  \varepsilon(c)=0 \; , \\
S(a)=a^* \; , \quad  \quad & S(c)=-q~c  \; ,
\end{array}
\end{equation}
For $q^4 \neq 0,1$, we consider the Drinfeld-Jimbo universal enveloping algebra $\usu$  generated by $E,F,K,K^{-1}$ with commutation relations 
\begin{equation}
\label{usl}
\begin{split}
KK^{-1} & = K^{-1}K =1 \;, \qquad KEK^{-1}=q ~E \; ,\qquad KFK^{-1}=q^{-1} F \; , \\
EF-FE & = \frac{K^2-K^{-2}}{q-q^{-1}} \; ,
\end{split}
\end{equation}
together with Hopf algebra structures
\begin{equation}
\label{usl-Hopf}
\begin{array}{lll}
\co(E)=E \ot K + K^{-1} \ot E \; , \quad & S(E)= - q E \; , & \varepsilon(E)= 0 \; ,
\\
\co(F)=F \ot K + K^{-1} \ot F   \; , & S(F)=-q^{-1}F \; , & \varepsilon(F)= 0 \; ,
\\
\co(K^{\pm 1})= K^{\pm 1}  \ot K^{\pm 1} \; , & S(K^{\pm 1})= K^{\mp 1} \; , & \varepsilon(K^{\pm 1})=1 
\end{array}
\end{equation}
and involution  $E^*=~F , ~~ F^*=  E ,~~ (K^{\pm 1})^*=K^{\pm 1} \,$ . The classical limit is obtained by $h\rightarrow0$ after  setting $q=e^{h}, ~K=e^{hH},~K^{-1}=e^{-hH}$. (We recall that a quantum deformation of the universal enveloping algebra of $su(2)$  can also be defined in a different not equivalent way, see e.g. \cite[\S 3.1.2]{ks}.)

The Hopf algebras $\SUq $ and $ \usu $ are dually paired: the non zero part of the pairing is given by  (see e.g \cite[\S 4.4.1]{ks})
\begin{equation}
\label{pairing}
\pa{K^{\pm 1}}{a} = q^{ \mp \frac{1}{2}} \;  , \qquad  \pa{K^{\pm 1}}{a^*}= q^{ \pm \frac{1}{2}} \; ,\qquad \pa{E}{c} =1 \; \; ,\qquad
\pa{F}{c^*}= - q^{-1} \; .
\end{equation}

We present the quantum tangent space $\T\subset\usu$ of Woronowicz's $4D_+$ first order bicovariant differential calculus $(\Gamma ,\dd)$  by the basis $\x{i},~ i \in \ii$ given by
\begin{equation}
\label{tangent}
\begin{array}{ll}
\xm & :=q^\frac{1}{2} FK^{-1} ~ ,\quad  \xz := \lambda^{-1} (K^{-2} -1) ~ , \quad \xp :=q^{-\frac{1}{2}} EK^{-1} \; ,\vspace{5pt}
\\
\xo &:=\lambda^{-2}[q(K^2 -1)+ q^{-1}(K^{-2}-1)] + FE \; ,
\end{array}
\end{equation}
where $\lambda= q-q^{-1}$.
The vector spaces $\T$ and $\Gamma$ form a nondegenerate dual  pairing with respect to the bilinear
form  defined by extending \eqref{pairing} as $\pa{X}{x\dd y}= \varepsilon(x) X(y)$.

We denote by $\{ \op ~ , \om ~ , \oz ~,\oo \}$ the dual elements forming a base for the space $_{inv}\Gamma$ of left-invariant one forms; by construction $\pa{\x{i}}{\o_j}= \delta_{ij}$ for $i,l \in \ii$. By definition $\dg(\omega_i)= 1 \ot \omega_i$, while the right coaction is written in terms of elements $J_{ki} \in \SUq$ for $k,i \in \ii$ by $\gd(\o_i)= \o_k \ot J_{ki}$. By direct computation, one easily determines the $J_{ki}$ as well as the $C_{ij}^k = \pa{X_j}{J_{ik}}$,  $i,j,k \in \ii$. First
\begin{equation*}
\begin{array}{llll}
J_{--}= (a^*)^2 \; , \quad& J_{-+}=-qc^2 \; , \quad & J_{-z}= (1+q^2)a^* c \; , \quad & J_{-0}= -q \lambda ~ a^* c  \; ,
\\
J_{+-}= -q(c^*)^2 \; , & J_{++}= a^2 \; ,& J_{+z}= (q+q^{-1})ac^* \; ,& J_{+0}=-\lambda ~ ac^*  \; ,
\\
J_{z-}= -q ~a^* c^* \; , & J_{z+}=-ac  \; , & J_{zz}=aa^* -cc^* \; , & J_{z0}= q \lambda ~cc^* \; ,
\\
J_{0-}= 0 \; , & J_{0+}=0 \; , & J_{oz}= 0 \; , & J_{00}= 1 \; ,
\end{array}
\end{equation*}
and then the nonzero structure constants are the following
\begin{equation*}
\begin{array}{ll}
C^-_{-0}= (q+q^{-1}) \; , \quad  C^-_{-z}= -C^-_{z-}= -q^{-1} \; , &
C^+_{+0}= (q+q^{-1}) \; , \quad  C^+_{+z}= -C^+_{z+}= q \; , \vspace{3pt}
\\
C^z_{z0}=C^z_{-+}= -C^z_{+-}=(q+q^{-1})\; , & 
C^0_{z0}= C^0_{-+}=-C^0_{+-}= - \lambda \; .
\end{array}
\end{equation*}

We also know that for any element $x \in \SUq$ we can write $\dd x= (\x{i} \la x) \o_i= x_{(1)} \x{i}(x_{(2)}) \o_i$; on generators we have
\begin{equation}
\begin{array}{ll}
\dd a = \frac{q}{q+1}~a ~\oz -q~c^* \op ~ +\xi a~ \oo \; , \quad &
\dd (a^*)=  c \om - \frac{1}{q+1}~a^* ~\oz  +\xi a^*~ \oo \; ,\vspace{2pt}
\\
\dd c = \frac{q}{q+1}~c ~\oz +a^* \op ~ +\xi c~ \oo \; ,  &
\dd (c^*)= -q^{-1} a \om - \frac{1}{q+1}~c^* ~\oz  +\xi c^*~ \oo \; ,
\end{array}
\end{equation}
where $\xi:= 1- \frac{q}{(q+1)^2}$. Inverse formulas are
\begin{equation}
\begin{array}{ll}
\om=c^* \dd a^* -q a^* \dd c^* \; , & \oz=a^* \dd a + c^* \dd c -a ~\dd a^* -q^2 c ~\dd c^* \; ,
\\
\op = a ~\dd c -q c ~\dd a \; , & \oo= \frac{q+1}{q^2 +q+1} (a^* \dd a + c^* \dd c + qa~\dd a^* + q^3 c ~\dd c^*) \; .
\end{array}
\end{equation}
It also holds $\oz^*=-\oz ~, ~ \op^*= - \om ~ , ~ \oo^* =-\oo$. In the classical limit $\oo$ goes to zero and the above reduces to the standard three-dimensional calculus on the classical two-sphere.

The $\SUq$-bimodule structure of $_{inv}\Gamma$ is expressed via elements $f_{ij} \in \usu$ such that $\o_i \,x = (f_{ij}\la x) \o_j$, for every $i, j \in \ii$. They are explicitly given by
\begin{equation}
\label{functionals_f}
\begin{array}{lll}
f_{--}= 1 \; , &   f_{-0}= q^{-\frac{1}{2}} KE  \; , &   f_{z-}= q^{\frac{1}{2}} \lambda FK^{-1} \; ,
\\
f_{zz}= K^{-2}  \; , &   f_{z+}= q^{-\frac{1}{2}} \lambda E K^{-1}  \; , &   f_{z0}= \lambda [FE + q^{-1} \lambda^{-2}(K^{-2} -K^2)] \; ,
\\
f_{++}= 1  \; , &   f_{+0}= q^{\frac{1}{2}} K F
 \; , &   f_{00}= K^2 
\end{array}
\end{equation}
and zero otherwise. Furthermore, together with the $J_{ki}\in\SUq$ introduced above, they allow for an explicit computation of the braiding $\sigma$. Along the $\{\o_i\}$ basis of $_{inv}\Gamma$ we write $\sigma(\omega_i \ot \omega_j):= \sigma_{ij}^{kl} \omega_k \ot \omega_l$, with $\sigma_{ij}^{nm}=\pa{f_{im}}{J_{nj}}$; with some work one can compute the non-vanishing components, which are 
\begin{equation}
\label{sig}
\begin{array}{llllllll}
&\sigma_{--}^{--}= 1  \; , &  \sigma_{zz}^{zz}= 1 \; , & \sigma_{zz}^{z0}= (q+q^{-1}) \lambda  \; , & \sigma_{zz}^{-+}=
(q+q^{-1}) \lambda \; , \vspace{5pt}
\\
&\sigma_{zz}^{+-}= - (q+q^{-1}) \lambda \; ,
&\sigma_{++}^{++}= 1 \; , &  \sigma_{00}^{00}=1  \; , & \sigma_{-z}^{z-}=1  \; ,\vspace{5pt}
\\
 & \sigma_{-z}^{-0}= 1+q^2  \; , &\sigma_{-+}^{+-}= 1  \; ,&\sigma_{z-}^{-0}= -(1 + q^{-2}) \; ,
&\sigma_{-+}^{z0}= -1 \; ,\vspace{5pt}
\\
&  \sigma_{-0}^{0-}= 1  \; ,
 & \sigma_{-0}^{-0}= -q \lambda  \; , & \sigma_{z-}^{z-}= q^{-1} \lambda  \; , &\sigma_{z-}^{-z}= q^{-2} \; ,\vspace{5pt}
\\
&\sigma_{z+}^{+z}= q^2 \; , &  \sigma_{z+}^{z+}= -q\lambda \; , & \sigma_{z+}^{+0}=1+q^2  \; , & \sigma_{z0}^{0z}=1  \; , \vspace{5pt}
\\
&\sigma_{z0}^{+-}= \lambda^2  \; ,
&\sigma_{z0}^{-+}= -\lambda^2 \; , &  \sigma_{z0}^{z0}= -\lambda^2  \; , & \sigma_{+-}^{-+}=1  \; , \vspace{5pt}
\\
& \sigma_{+-}^{z0}= 1 \; , &\sigma_{+z}^{z+}= 1  \; ,
&\sigma_{+z}^{+0}= -(1+q^{-2}) \; , &  \sigma_{+0}^{0+}= 1 \; , \vspace{5pt}
\\
& \sigma_{+0}^{+0}= q^{-1}\lambda \; , & \sigma_{0-}^{-0}= q^2 \; , &\sigma_{0z}^{z0}= 1 \; ,
&\sigma_{0+}^{+0}= q^{-2} \; .
\end{array}
\end{equation}

\noindent
All necessary ingredients to write down the commutation relations \eqref{quat} among even generators of $\cartsu$ are now provided.
The commutator \eqref{ja} is for instance given by (no sum on  $i$) :
\begin{equation}
\begin{array}{l}
\left[ X_i,X_i \right]= 0 \;  \;, \qquad i = -,+,z,0 \vspace{3pt}\\
\left[ \xo,X_i \right]= 0 \; , \qquad i = -,+,z \vspace{3pt}\\
\left[ \xm,\xp \right] =  -\left[\xp,\xm\right] = (q + q^{-1}) \xz - \lambda \xo \; ,\vspace{3pt}\\
\left[\xm,\xz\right]=  -\left[\xz,\xm\right] = - q^{-1} \xm \; ,\vspace{3pt}\\
\left[\xm,\xo\right]= (q + q^{-1}) \xm \; ,\vspace{3pt}\\
\left[\xp,\xz\right]=  -\left[\xz,\xp\right] = q \xp \; ,\vspace{3pt}\\
\left[\xp,\xo\right]= (q + q^{-1}) \xp \; ,\vspace{3pt}\\
\left[\xz,\xo \right] = (q + q^{-1}) \xz - \lambda \xo
 \; .
\end{array}
\end{equation}

\noindent
To complete the description of the algebra structure of $\cartsu$ and give the relations involving also the odd generators  $\xi_i$, $i \in \ii$,  we  need to describe the kernel of the operator $id-\sigma^t$.
\\
Two forms are defined to be the elements of the quotient $\Gamma^{\wedge 2}:=\Gamma^{\ot 2} / \sq$
where $\sq:=\ker(id-\sigma)$ and for this specific calculus it is computed to be
\begin{equation}
\sq=\left\{ \begin{array}{l}
\om \ot \om  \; , \quad \op \ot \op \; , \quad \oo \ot \oo  \; , \quad q^2\op \ot \oz  +\oz \ot \op  \; ,\vspace{5pt}
\\  \oz \ot \om  +q^{-2}\om \ot \oz \; ,
\quad \op \ot \oo  +\oo \ot \op  \; , \quad \om \ot \oo  +\oo \ot \om  \; , \vspace{5pt}
\\ \op \ot \om  +\om \ot \op  \; , \quad \oz \ot \oo  +\oo \ot \oz - \lambda^2 \om \ot \op \; , \vspace{5pt}
\\  \oz \ot \oz  - \lambda (q+q^{-1})\op \ot \om  \quad
\end{array}
\right\} ~.
\end{equation}
\noindent
We can here  compute explicitly the kernel of the operator $id-\sigma^t$:
\begin{lem} The kernel $T_q:=\ker(id-\sigma^t)$ is given by
\begin{equation} \label{kst}
\left\{ \begin{array}{l}
\xm \ot \xm  \; , \quad \xp \ot \xp  \; , \quad \xo \ot \xo  \; , \quad  \xz \ot \xz \; ,
\vspace{5pt} \\
\xp \ot \xm  +\xm \ot \xp  \; , \quad
\xp \ot \xz  +\xz \ot \xp  \; , \quad
\vspace{5pt}\\
\xm \ot \xz  +\xz \ot \xm
\; , \quad
\xo \ot \xz  +\xz \ot \xo - \xm \ot \xp  \; , \vspace{5pt}
\\ q^{-2} \xm \ot \xo  +  \xo \ot \xm - (1+q^{-2}) \xz \ot \xm \; , \vspace{5pt}
\\  q^2  \xp \ot \xo  +  \xo \ot \xp + (1+q^{2}) \xz \ot \xp \quad
\end{array}
\right\}~ .
\end{equation}
\end{lem}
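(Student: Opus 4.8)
The plan is to treat the statement as an explicit, purely linear-algebraic computation from the data of \eqref{sig}. Recall from the discussion preceding \eqref{ja} that $\sigma^{t}$ is the transpose of the braiding with respect to the non-degenerate pairing between $\T$ and $\linv$, with components $\sigma^{t}(\x{i}\ot\x{j})=\sig{k}{l}{i}{j}\,\x{k}\ot\x{l}$; that is, the matrix of $\sigma^{t}$ is obtained from \eqref{sig} by exchanging the roles of the upper and lower index pairs. So no information beyond \eqref{sig} is needed, and the task is just to compute the $1$-eigenspace of a $16\times16$ matrix.

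The first reduction is by the grading. All non-zero components $\sig{i}{j}{k}{l}$ of \eqref{sig} satisfy $w(i)+w(j)=w(k)+w(l)$, where $w(\xp)=+1$, $w(\xm)=-1$, $w(\xz)=w(\xo)=0$ — equivalently, $\sigma$ intertwines the residual $U(1)\subset\SUq$ coactions — and hence the same holds for $\sigma^{t}$ on $\T\ot\T$. Therefore $\T\ot\T$ splits into weight eigenspaces of dimensions $1,4,6,4,1$ (weights $+2,+1,0,-1,-2$) and $1-\sigma^{t}$ is block-diagonal for this decomposition. On the two one-dimensional blocks $\sigma^{t}=\mathrm{id}$, the only relevant entries being $\sig{-}{-}{-}{-}=\sig{+}{+}{+}{+}=1$, which contributes $\xm\ot\xm$ and $\xp\ot\xp$ to the kernel.

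On each of the three remaining blocks one writes out the small matrix of $\sigma^{t}$ from \eqref{sig} and computes $\ker(1-\sigma^{t})$ by elementary row reduction. For example, on the weight $-1$ block, with basis $\{\xm\ot\xz,\xz\ot\xm,\xm\ot\xo,\xo\ot\xm\}$, one gets $\sigma^{t}(\xm\ot\xz)=q^{-2}\,\xz\ot\xm$ and $\sigma^{t}(\xz\ot\xm)=\xm\ot\xz+q^{-1}\lambda\,\xz\ot\xm$, so that $q^{-2}+q^{-1}\lambda=1$ produces the fixed vector $\xm\ot\xz+\xz\ot\xm$, and similarly for the second generator involving $\xo$; the weight $+1$ block is dealt with identically, and the six-dimensional weight $0$ block yields the four generators $\xp\ot\xm+\xm\ot\xp$, $\xz\ot\xz$, $\xo\ot\xo$ and $\xo\ot\xz+\xz\ot\xo-\xm\ot\xp$. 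Collecting the contributions of the five blocks gives exactly the ten elements displayed in \eqref{kst}.

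That these span the entire kernel follows from a dimension count: $\sigma^{t}$ is the transpose of $\sigma$, so $\mathrm{rank}(1-\sigma^{t})=\mathrm{rank}(1-\sigma)$ and hence $\dim\ker(1-\sigma^{t})=\dim\ker(1-\sigma)=\dim\sq=10$, in agreement with the ten listed elements. The only real difficulty in the whole argument is bookkeeping — transcribing the entries of \eqref{sig} with the index pairs in the transposed positions, and not overlooking any eigenvector in the weight $0$ block, which is the largest — but there is no conceptual obstruction.
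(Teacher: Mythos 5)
Your proposal is correct and takes essentially the same route as the paper's own proof: a direct computation showing the listed elements are fixed by $\sigma^t$, completed by the dimension count $\dim\ker(id-\sigma^t)=\dim\ker(id-\sigma)=10$. The weight-block decomposition and the explicit rank-of-the-transpose remark are just convenient ways of organizing and justifying that same direct computation and dimension comparison.
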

\begin{proof}
One can prove by direct computation that the above elements are in $\ker(id -\sigma^t)$. We can conclude the proof  by comparison with the dimension of $\ker(id - \sigma)$.
\end{proof}

From \eqref{kst} we can derive the  form of the matrix $t^{ab}_{cd}$  for the $4D_+$ calculus  and hence the explicit form of equations \eqref{relcart} (involving even and odd generators) for the algebra $\cartsu$. For instance the commutation relations involving the odd generators $\xi_i$ read:
\begin{equation}
\label{ii}
\begin{array}{ll}
\xi_{+}  \xi_{+} =0 \; , &\xi_{-} \xi_{-}=0 \; , \\
\xi_{0} \xi_{0}=0  \; , & \xi_{z}\xi_{z}=0 \; ,\\
\xi_{0} \xi_{-} + q^{-2} \xi_{-} \xi_{0} -(1+q^{-2}) \xi_{z} \xi_{-} = 0 \; ,&
\xi_{0} \xi_{+} + q^{2} \xi_{+} \xi_{0} +(1+q^{2}) \xi_{z} \xi_{+} = 0 \; ,\\
\xi_{0} \xi_{z} + \xi_{z} \xi_{0}-\xi_{-} \xi_{+} =0 \; ,&  \xi_{+} \xi_{-}+\xi_{-} \xi_{+} =0 \; ,\\
\xi_{+} \xi_{z} + \xi_{z} \xi_{+}=0 \; , & \xi_{-} \xi_{z} + \xi_{z} \xi_{-} =0 \; .
\end{array}
\end{equation}

To complete the presentation of $\cartsu$ we are left with the explicit characterization of the antipode map $S$ of \eqref{antip}, coproduct and counit are indeed determined by the objects already computed.
In the discussion before Thm \ref{thms} we introduced the terminology of '\textit{$S$-compatible}' calculus; we can now verify that the $4D_+$ calculus on $\SUq$ has this property.

\begin{lem} The antipode of $\usu$ restricts to a well defined map in the sub-algebra generated by elements $f_{ab}\in\usu$ of the $4D_+$ calculus $\Gamma$. Then $\Gamma$ is '\textit{$S$-compatible}'.
\end{lem}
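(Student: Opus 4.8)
The plan is to reduce the statement to an elementary closure fact and then verify it on a small generating set. Recall that $\Gamma$ is called \textit{$S$-compatible} precisely when $S(f_{ij})\in\even$ for all $i,j\in\ii$. Since the unital subalgebra $\mathcal{B}\subseteq\usu$ generated by the functionals $\{f_{ij}\}$ of \eqref{functionals_f} is contained in $\even$, it suffices to prove the stronger statement that $\mathcal{B}$ is stable under the antipode of $\usu$, i.e.\ $S(\mathcal{B})\subseteq\mathcal{B}$. (Incidentally, here one even has $\even=\mathcal{B}$, since each $\x{i}$ in \eqref{tangent} is already a polynomial in the $f_{ij}$ and $K^{\pm2}$, but this observation is not needed for the proof.)

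First I would rewrite $\mathcal{B}$ in terms of a convenient generating set. Reading off \eqref{functionals_f}, apart from $f_{--}=f_{++}=1$ the non-zero functionals are, up to non-zero scalars, $K^{\pm2}$ (namely $f_{00}$ and $f_{zz}$), $KE$ (from $f_{-0}$), $KF$ (from $f_{+0}$), $EK^{-1}$ (from $f_{z+}$), $FK^{-1}$ (from $f_{z-}$), together with $f_{z0}=\lambda FE+q^{-1}\lambda^{-1}(K^{-2}-K^{2})$. Since $K^{\pm2}\in\mathcal{B}$, this last relation lets one solve for $FE$, so $FE\in\mathcal{B}$; conversely each of these elements lies in $\mathcal{B}$. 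Hence $\mathcal{B}$ is precisely the unital subalgebra generated by $\{K^{\pm2},\,KE,\,KF,\,EK^{-1},\,FK^{-1},\,FE\}$.

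Then I would apply $S$ to these generators. Using that $S$ is an algebra anti-homomorphism together with \eqref{usl-Hopf}, one finds $S(K^{\pm2})=K^{\mp2}$, $S(KE)=-q\,EK^{-1}$, $S(EK^{-1})=-q\,KE$, $S(KF)=-q^{-1}FK^{-1}$, $S(FK^{-1})=-q^{-1}KF$, and $S(FE)=S(E)S(F)=EF$. The first five are manifestly back in $\mathcal{B}$; for the sixth one invokes the $\usu$ relation $EF-FE=\lambda^{-1}(K^{2}-K^{-2})$ from \eqref{usl}, which together with $K^{\pm2}\in\mathcal{B}$ gives $EF\in\mathcal{B}$. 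Since $S$ is an algebra anti-homomorphism carrying a set of algebra generators of $\mathcal{B}$ into $\mathcal{B}$, it follows that $S(\mathcal{B})\subseteq\mathcal{B}$, hence $S(f_{ij})\in\even$ for all $i,j$, i.e.\ the $4D_{+}$ calculus on $\SUq$ is $S$-compatible.

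There is no real obstacle here: the only computation going beyond "apply $S$ to a generator" is the treatment of $f_{z0}$ (equivalently, recognising $S(FE)=EF\in\mathcal{B}$ via the $[E,F]$-relation); everything else is immediate. If one prefers an even more explicit presentation, I would simply tabulate each $S(f_{ij})$ as a scalar multiple of (or short linear combination of) the $f$'s — for instance $S(f_{-0})=-q\lambda^{-1}f_{z+}$, $S(f_{+0})=-q^{-1}\lambda^{-1}f_{z-}$, $S(f_{z\pm})=-q^{\mp1}\lambda f_{\mp0}$, and $S(f_{z0})=f_{z0}+(1+2q^{-1}\lambda^{-1})(f_{00}-f_{zz})$ — which displays the stability of $\mathcal{B}$ directly.
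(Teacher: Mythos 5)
Your main argument is correct and complete: reducing $S$-compatibility to antipode-stability of the subalgebra $\mathcal{B}\subseteq\usu$ generated by the $f_{ij}$, checking $S$ on the generating set $\{K^{\pm2},\,KE,\,KF,\,EK^{-1},\,FK^{-1},\,FE\}$, and absorbing $S(FE)=EF$ back into $\mathcal{B}$ via $EF-FE=\lambda^{-1}(K^{2}-K^{-2})$ indeed proves $S(f_{ij})\in\even$ for all $i,j$. The paper's proof is in the same spirit (direct computation of the antipode on the $f$'s) but is organized more explicitly: it exhibits each $S(f_{ij})$ as a quadratic expression in the $f$'s, packaged as the matrix $S(f)$ of \eqref{Sf_matrix} (for instance $S(f_{\mp 0})=-f_{\mp 0}f_{zz}$, $S(f_{z\pm})=-f_{00}f_{z\pm}$, $S(f_{z0})=g$), and verifies $(S(f))_{\a\b}f_{\b\gamma}=\delta_{\a\gamma}=f_{\a\b}(S(f))_{\b\gamma}$; the benefit of that explicitness is that the closed formulas are reused right away (e.g.\ for $S^{-1}(f)$ in \eqref{Sf_inv}), whereas your closure argument is more economical if one only wants the qualitative statement. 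One correction to your optional tabulation (which your proof does not rely on): the powers of $q$ in $S(f_{z\pm})$ are swapped — a direct computation, or the paper's $S(f_{z\pm})=-f_{00}f_{z\pm}$, gives $S(f_{z+})=-q\,\lambda f_{-0}$ and $S(f_{z-})=-q^{-1}\lambda f_{+0}$, i.e.\ $S(f_{z\pm})=-q^{\pm1}\lambda f_{\mp 0}$; your expressions for $S(f_{\mp 0})$ and $S(f_{z0})$ are correct.
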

\begin{proof}
One can directly compute the image under the antipode of $\usu$ of the $f$'s from their explicit expression in \eqref{functionals_f}, and verify that indeed for any $f$, $S(f)$ is a polynomial expression in the $f$'s themselves. Alternatively, we can arrange the $f$'s of \eqref{functionals_f} in a matrix with indices running in the (ordered) set $\ii$:
\begin{equation}
\label{f_matrix}
f:=
\begin{pmatrix}
1 &0 & 0 & f_{-0}
\\
0 & 1 & 0 & f_{+0}
\\
f_{z-} & f_{z+} & f_{zz} & f_{z0}
\\
0 & 0& 0 & f_{00}
\end{pmatrix} ~ .
\end{equation}
Then we can introduce the $\usu$-valued matrix $S(f)$ defined as
\begin{equation}
\label{Sf_matrix}
S(f) :=
\begin{pmatrix}
1 &0 & 0 & -\, f_{-0}f_{zz} \\
0 & 1 & 0 & -\, f_{+0}f_{zz} \\
-\,f_{00}f_{z-} & -\,f_{00}f_{z+} & f_{00} & g \\
0 & 0& 0 & f_{zz}
\end{pmatrix} \vspace{3pt}
\end{equation}
where $g=f_{z-}f_{-0}+ f_{z+}f_{+0}- \, f_{z0}= \lambda[EF+ q^{-1} \lambda^{-2} (K^{2}-K^{-2})]$. It is now a direct computation to show that
$(S(f))_{\a\b}f_{\b\gamma} = \delta_{\a\gamma} = f_{\a\b}(S(f))_{\b\gamma}$, and indeed that $S(f_{\a\b}) = (S(f))_{\a\b} $.
\end{proof}

\noindent
We conclude by giving the inverse of the antipode on the $f$'s, since it is used in Theorem \ref{thmcartr}:
\begin{equation}
\label{Sf_inv}
S^{-1}(f) :=
\begin{pmatrix}
1 &0 & 0 & -\,f_{zz}f_{-0}  \\
0 & 1 & 0 & -\, f_{zz}f_{+0} \\
-\,f_{z-}f_{00} & -\,f_{z+}f_{00} & f_{00} & g \\
0 & 0& 0 & f_{zz}
\end{pmatrix} ~ . \vspace{3pt}
\end{equation}

\subsection*{Acknowledgments}
We thank Paolo Aschieri, Giovanni Landi and Yuri I.Manin for useful discussions. This work has been developed during a common visit of the authors at
Max-Planck-Institut f\"ur Mathematik in Bonn, we gratefully acknowledge their support and hospitality.  
We thank the Hausdorff Zentrum f\"ur Mathematik der Universit\"at Bonn and the Stiftelsen Blanceflor Boncompagni-Ludovisi (Stockholm) for their support.


\begin{thebibliography}{99}


\bibitem{paolo} P. Aschieri, \emph{On the geometry of inhomogeneous quantum groups.} Scuola Normale Superiore, Pisa, 1999.

\bibitem{ac} P. Aschieri, L. Castellani, \emph{An introduction to noncommutative differential geometry for quantum groups}, Intern. J. Modern Phys. A 8 (1993), 1667-1706.

\bibitem{as} P.Aschieri, P.Schupp, \emph{Vector fields on quantum groups}, Int.J.Mod.Phys. A11 (1996), 211-235.

\bibitem{ber}  D. Bernard, \emph{Quantum Lie algebras and differential calculus on quantum groups.} Progr. Theoret. Phys. Suppl. No. 102 (1990), 49-66.

\bibitem{cartan1} H. Cartan,
\emph{Notions d'alg\`{e}bre diff\'erentielle; application aux groupes de Lie et aux vari\'et\'es o\`{u} op\`{e}re un groupe de Lie.} Colloque de topologie (espaces fibr\'es), Bruxelles, 1950, 15-27.


\bibitem{lu} L. Cirio, \emph{Twisted noncommutative equivariant cohomology: Weil and Cartan models}, [arXiv:0706.3602v3].

\bibitem{cpz} L. Cirio, C. Pagani, A. Zampini, in preparation.

\bibitem{dr1} V.G. Drinfeld, \emph{On almost cocommutative Hopf algebras}, Leningrad Math. J. 1 (1990), 321-342.

\bibitem{dr2} V.G. Drinfeld, \emph{Quasi-Hopf algebras}, Leningrad Math. J. 1 (1990), 1419-1457.

\bibitem{dr4} V.G. Drinfeld, \emph{Hopf algebras and the quantum Yang-Baxter equation}, Soviet Math. Dokl. 32 (1985), 254-258.

\bibitem{majid} X. Gomez, S. Majid, \emph{Braided Lie algebras and bicovariant differential calculi over co-quasitriangular Hopf algebras}. J. Algebra 261 no. 2 (2003), 334-388.

\bibitem{gs} V. Guillemin, S. Sternberg, \emph{Supersymmetry and equivariant de Rham cohomology}, Springer-Verlag 1999.

\bibitem{jim} M. Jimbo, \emph{A $q$-analogue of $U(g)$ and the Yang-Baxter equation}, Lett. Math. Phys. 10 (1985), 63-69.

\bibitem{ks} A. Klimyk, K.  Schm\"udgen, \emph{Quantum groups and their  representations}, Springer-Verlag 1997.

\bibitem{frt} N.Yu. Reshetikhin, L.A. Takhtadzhyan,  L.D. Faddeev,  \emph{Quantization of Lie groups and Lie algebras.} Leningrad Math. J. 1 (1990), 193-225.

\bibitem{schef} P.Schupp, \emph{Cartan calculus: differential geometry for quantum groups}, Lecture at Enrico Fermi summer school, [arXiv:hep-th/9408170].

\bibitem{sch3} P. Schupp, P: Watts, \emph{Universal and generalized Cartan calculus on Hopf algebras}, [hep-th/9402134].

\bibitem{swz1} P.Schupp, P.Watts, B.Zumino, \emph{Differential geometry of linear quantum groups},
Lett.Math.Phys. 25 (1992), 139-148.


\bibitem{sch1} P. Schupp, P. Watts, B. Zumino, \emph{Cartan Calculus for Hopf Algebras and Quantum Groups}, [hep-th/9306022].

\bibitem{sch4} P. Schupp, P. Watts, B. Zumino, \emph{Bicovariant Quantum Algebras and Quantum Lie Algebras}, Comm. Math. Phys. 157 (1993), 305-329.


\bibitem{sch2} P. Schupp, P. Watts, B. Zumino, \emph{Cartan calculus on quantum Lie algebras}, Adv. Appl. Clifford Alg.(Proc.Suppl.) 4-Suppl.1 (1994), 125-134.


\bibitem{wor} S.L. Woronowicz, \emph{Differential Calculus on Compact Matrix Pseudogroups (Quantum Groups)},  Comm. Math. Phys. 122 (1987), 125-170.
\bibitem{wor2} S.L. Woronowicz, \emph{Twisted $SU(2)$ group.  An example of a noncommutative differential calculus},  Publ.\ Res.\  Inst.\ Math.\ Sci.\ 23 (1987),  117-181.




\end{thebibliography}
\end{document}